\documentclass[11pt]{amsart}
\usepackage{amsmath,a4wide,scalerel}
\usepackage{mathabx}
\usepackage{xcolor,graphicx}
\usepackage{mathrsfs}
\usepackage{pgfplots}
\usepackage{subcaption}
\usepackage[normalem]{ulem}
\usepackage{float}
\usepackage{accents}
\usepackage{bbold}
\usepackage{enumitem}

\usepackage{hyperref}
\newtheorem{theorem}{Theorem}
\newtheorem{lemma}[theorem]{Lemma}
\newtheorem{proposition}[theorem]{Proposition}

\newtheorem{corollary}[theorem]{Corollary}

\newtheorem{assumption}{Assumption}

\newcommand{\N}{{\mathbb N}}
\newcommand{\E}{{\mathbb E}}
\newcommand{\PP}{{\mathbb P}}

\newcommand*\diff{\mathop{}\!\mathrm{d}}
\newcommand{\D}{\mathcal{D}}

\newcommand{\CD}{\operatorname{cl} \D}
\newcommand{\app}{\operatorname{approx}}
\newcommand{\refe}{\operatorname{ref}}
\newcommand{\aux}{\operatorname{aux}}

\newcommand{\C}{\mathcal{C}}
\newcommand{\EM}{\textrm{EM}}
\newcommand{\MIL}{\textrm{MIL}}
\newcommand{\IT}{\textrm{IT}}
\newcommand{\LIT}{\textrm{LIT}}
\newcommand{\SEM}{\textrm{SEM}}
\newcommand{\TE}{\textrm{TE}}
\newcommand{\LEM}{\textrm{LEM}}
\newcommand{\Lo}{\mathcal{L}^{0}}
\newcommand{\Li}{\mathcal{L}^{1}}
\newcommand{\LL}{\mathcal{L}}
\newcommand{\id}{\operatorname{id}}
\newcommand{\Lip}{\operatorname{Lip}}

\usepackage{color}
\usepackage{dsfont}

\title[]{Boundary-preserving Lamperti--Itô--Taylor approximations for some stochastic differential equations.}

\date{\today}

\author{Johan Ulander}
              \address{Swedish Defence Research Agency (FOI), 58330 Linköping, Sweden}
              \email{\tt johan.ulander@foi.se}
              
\begin{document}

\begin{abstract}
In this work, we propose high-order boundary-preserving numerical schemes for the strong approximation for some scalar stochastic differential equations with invariant domains being open and bounded intervals. The proposed methods involve using the Lamperti transform to map the SDE to another SDE with additive noise with a trivial invariant domain. Then, by imposing regularity assumptions on the original coefficient functions, we can guarantee that the drift coefficient function of the transformed SDE is regular, and known high-order schemes can be used to achieve the desired convergence order. We confirm the theoretical results with numerical experiments.
\end{abstract}

\maketitle

{\small\noindent
{\bf AMS Classification.} 60H10. 60H35. 	65C30.

\bigskip\noindent{\bf Keywords.} Stochastic differential equations. Boundary-preserving schemes. High-order schemes. Lamperti transform. Itô--Taylor schemes. Strong convergence. Explicit schemes.

\vspace{1cm}

\section{Introduction}\label{sec:intro}
Stochastic differential equations (SDEs) are used to model a wide range of physical and non-physical phenomena. This includes various phenomena in finance, biology, and physics, to name but a few \cite{MR3363443,GraySIS, Karlin1981ASC,KERMACK199133, introTostocCalc, MR1214374, MR496534,MR2001996}. These models typically do not have closed-form solutions, which means that numerical approximations are crucial to their use. The literature on general numerical schemes to approximate the solutions of SDEs is by now well-established. In recent years, interest in numerical schemes that, in addition to converging, preserve certain behaviour or properties of the underlying SDE has risen. These types of numerical schemes belong to the wider class of structure-preserving methods \cite{MR2221614}. In this work, we consider scalar stochastic differential equations (SDEs) whose solutions remain within a bounded domain for all time. Such a domain is called an invariant domain: whenever the initial condition belongs to the domain, the corresponding solution remains in the domain for all later times. We propose numerical schemes that preserve this invariant domain and achieve arbitrarily high strong convergence order, provided that the coefficients of the SDE are sufficiently regular. Examples of such SDEs include, but are not limited to, Allen--Cahn type SDEs, SIS-type SDEs, and Nagumo-type SDEs, all of which we provide numerical experiments for in Section~\ref{sec:numExp}. Many works have shown that general-purpose and classical numerical schemes to approximate the solutions of SDEs do not preserve invariant domains \cite{Ulander2024AB,MR4737060}. Typically, failure to preserve the invariant domain arises from the discretisation of the noise, which introduces random variables with unbounded support on the real line. 

Several approaches have been developed for constructing numerical schemes that preserve the invariant domains of SDEs. These include implicit \cite{MR2931351,MR4850627,MR1410392}, transformation-based \cite{MR3006996,MR4220738,MR2898556,MR3248050,MR4274899}, truncation-based \cite{MR2367990,MR4242953}, geometric Brownian motion (gBM)-based \cite{MR4177372,MR4780408,MR4729657,DomPres}, and time-splitting \cite{MR4780408,MR4544037,MR2341800} methods. We mention the PhD thesis of the author \cite{MR5051815} that discusses each of these methods in detail. The class of methods most relevant to the present work is that of Lamperti-based schemes, a subclass of transformation-based methods in which the Lamperti transform is applied to the original SDE. This transformation converts the SDE into one with additive noise, typically at the expense of a more complicated drift coefficient. It is particularly well suited to the construction of boundary-preserving numerical schemes, since it usually defines a bijection between the invariant domain of the original SDE and a transformed domain on which the resulting SDE is discretised. Consequently, any numerical approximation that remains within the transformed domain is mapped by the inverse Lamperti transform to an approximation that remains within the invariant domain of the original SDE. Recent research has therefore focused on developing Lamperti-based schemes that achieve high-order strong convergence, are applicable to broad classes of SDEs, and are straightforward to implement. We mention recent works \cite{MR4888024,MR4597411} that develop Lamperti-based schemes with convergence orders exceeding the strong order $1$ typically achieved by such methods.

In numerical stochastic analysis, the Euler--Maruyama (EM) scheme is the natural analogue of the forward Euler method for ordinary differential equations. Under standard global Lipschitz and regularity assumptions, the EM scheme converges strongly with order $1/2$. In the case of additive noise, this order can be improved to $1$, provided that the drift coefficient satisfies suitable regularity assumptions. For strong convergence of high-orders, Itô--Taylor schemes provide a natural class of methods. These schemes are derived by truncating the Itô--Taylor expansion of the solution, and their convergence order is determined by the level at which this expansion is truncated. Consequently, if the coefficient functions are sufficiently smooth and possess bounded derivatives of the required orders, numerical schemes of arbitrarily high strong order can be constructed in this way. However, as is the case for many classical discretisation methods, standard Itô--Taylor schemes do not, in general, preserve invariant domains. Modified Itô--Taylor schemes that achieve arbitrarily high strong convergence orders while approximately preserving the invariant domain were introduced in \cite{MR2481529}. The present work addresses the issue of exact domain preservation by constructing Lamperti-based numerical schemes which, under sufficient regularity assumptions on the coefficient functions, attain arbitrarily high strong order while preserving the invariant domain exactly for a class of SDEs. This class is closely related to that studied in \cite{MR4737060}, but we impose stronger regularity assumptions in order to obtain strong convergence of high-order.

The main contributions of this paper are the following:
\begin{itemize}
[
    topsep=3pt,
    itemsep=0pt,
    parsep=0pt,
    partopsep=0pt
]
\item We propose a family of boundary-preserving numerical schemes for the strong approximation for a family of SDEs with a bounded invariant domain.
\item We prove the boundary-preserving property and strong convergence order that depends on the regularity of the coefficient functions, see Corollary~\ref{cor:LIT_conv}.
\item We numerically verify the boundary-preserving property and the strong convergence order obtained theoretically, see Section~\ref{sec:numExp}.
\end{itemize}
To the best of our knowledge, these are the first boundary-preserving numerical schemes for SDEs with solutions in a bounded domain that achieve strong convergence orders higher than $1.5$. 

The paper is structured as follows. First, we introduce the setting for the work. Next, in the main section of this work, we define the Lamperti--Itô--Taylor schemes of arbitrarily high order (of strong convergence) and prove the main strong convergence theorem. This section also includes an overview of Itô--Taylor expansions and schemes, with a particular focus on SDEs with additive noise, as this is the relevant case for the Lamperti--Itô--Taylor schemes. Here we also discuss the computation and approximation of the iterated integrals showing up in the Itô--Taylor schemes needed for the numerical computations. Finally, we provide numerical experiments in Section~\ref{sec:numExp} to numerically verify the theoretical results in Section~\ref{sec:num_schemes}.

\section{Setting}\label{sec:setting}
This section introduces the needed notions and notation for the work. Throughout this work, $\mathbb{R}$ denotes the real line and $\mathbb{N} = \{ 1,2,3,\ldots \}$ denotes the natural numbers. We let $(\Omega, \mathcal{F}, \mathbb{P})$ be a fixed probability space equipped with a complete and right-continuous filtration $\left( \mathcal{F}_{t} \right)_{t \in [0, T]}$, and $\E$ denotes the expectation operator. We let $\C^{k}(A)$ denote the space of $k$ times continuously differentiable functions from $A$ to $\mathbb{R}$, and we let $\C_{b}^{k}(A)$ denote the space of $k$ times continuously differentiable functions from $A$ to $\mathbb{R}$ whose derivatives are bounded from order $0$ up to order $k$. A bounded $0$th derivative means that the function is bounded. We use $\Lip_{F}$ and $L_{F}$ to denote the Lipschitz constant and the linear growth constant, respectively, of the function $F$. We let $C$ denote a generic constant that may change from line to line. Most equalities and inequalities are to be understood in the almost sure sense.

We consider scalar Itô stochastic differential equations of the form
\begin{equation}\label{eq:SDE}
\left\lbrace
\begin{aligned}
& \diff X(t) = f(X(t)) \diff t + g(X(t)) \diff B(t),\ t \in (0,T], \\ 
& X(0) = x_{0} \in \D,
\end{aligned}
\right.
\end{equation}
where $T \in (0,\infty)$, $f: \mathbb{R} \to \mathbb{R}$ and $g: \mathbb{R} \to \mathbb{R}$ are functions satisfying some regularity assumptions to be specified in Section~\ref{sec:LampTrans}, $B$ is a standard Brownian motion, and $\D = (a,b) \subset \mathbb{R}$ is an open and bounded set such that
\begin{equation}\label{eq:invDom}
\PP (X(t) \in \D,\ \forall t \in [0,T]) = 1.
\end{equation}
We emphasise that the assumptions on $f$ and $g$ specified in Section~\ref{sec:LampTrans} imply that~\eqref{eq:invDom} is satisfied.

A (strong) solution $X$ to~\eqref{eq:SDE} is to be understood as a stochastic process satisfying the following integral equation
\begin{equation*}
X(t) = x_{0} + \int_{0}^{t} f(X(s)) \diff s + \int_{0}^{t} g(X(s)) \diff B(s),\ t \in [0,T],
\end{equation*}
where $\left( B(t) \right)_{t \in [0,T]}$ is a given Brownian motion.

\section{The Lamperti transform}\label{sec:LampTrans}
We impose the following assumptions on $f$ and $g$, which generalise the assumptions from \cite{MR4737060}. We fix a regularity parameter $k \in \{2,3,4,\ldots\}$, which determines the differentiability requirements imposed on the coefficient functions $f$ and $g$ in the assumptions below.
\begin{assumption}\label{ass:f}
\emph{The drift coefficient} $f \in \C^{k} \left( \CD \right)$.
\end{assumption}
\begin{assumption}\label{ass:g}
\emph{The diffusion coefficient $g \in \C^{k+1} \left(\CD \right) $ and is strictly positive on $\D = (a,b)$,  and the following non-integrability conditions are satisfied}
\begin{equation}\label{eq:intCond}
\int_{w_{0}}^{a} \frac{1}{g(w)} \diff w = - \infty,\ \int_{w_{0}}^{b} \frac{1}{g(w)} \diff w = \infty,
\end{equation}
\emph{for any $w_{0} \in \D$.}
\end{assumption}
\begin{assumption}\label{ass:fg}
\emph{The drift coefficient $f$ decays at least as fast as the diffusion coefficient $g$ near the boundary points $\partial \D$; that is, the following limits exist and are finite}
\begin{equation}\label{eq:limCond}
\left| \lim_{r \searrow a} \frac{f(r)}{g(r)} \right| + \left| \lim_{x \nearrow b} \frac{f(r)}{g(r)} \right| < \infty.
\end{equation}
\end{assumption}
We next provide the prototypical example of coefficient functions $f$ and $g$ that satisfy Assumptions~\ref{ass:f},~\ref{ass:g}, and~\ref{ass:fg}, and it is taken from \cite{MR4737060}. Let $g$ be of the form
\begin{equation*}
g(r) = (r - a)^{\beta_{a}} (r - b)^{\beta_{b}} \tilde{g}(r),\ r \in [a,b],
\end{equation*}
where $\beta_{a}, \beta_{b} \in \{ 1,2,3,\ldots \}$ are the multiplicities of the roots $r=a$ and $r=b$, respectively, and where $\tilde{g}$ is some polynomial with no roots in $[a,b]$. Next, let $f$ be of the form
\begin{equation*}
f(r) = (r-a)^{\delta_{a}} (r-b)^{\delta_{b}} \tilde{f}(r),\ r \in [a,b],
\end{equation*}
where $\delta_{a}, \delta_{b} \in \{ 1,2,3,\ldots \}$ are the multiplicities of the roots $r=a$ and $r=b$, respectively, and where $\tilde{f}$ is some polynomial with no roots in $\CD = [a,b]$. If $1 \leq \beta_{a} \leq \delta_{a}$ and if $1 \leq \beta_{b} \leq \delta_{b}$, then $f$ and $g$ satisfy Assumptions~\ref{ass:f},~\ref{ass:g}, and~\ref{ass:fg} with $\D = (a,b)$.

Next, we will motivate the above assumptions. The Lamperti transform~\cite{MR176536} is defined by
\begin{equation}\label{eq:LampTrans}
\Phi(r) = \int_{w_{0}}^{r} \frac{1}{g(w)} \diff w,\ r \in \D,
\end{equation}
for some $w_{0} \in (a,b)$, and is constructed in such a way that the process $Y(t) = \Phi(X(t))$, where $X(t)$ is the solution of~\eqref{eq:SDE}, satisfies
\begin{equation}\label{eq:SDE-Lamp}
\left\lbrace
\begin{aligned}
& \diff Y(t) = H(Y(t)) \diff t + \diff B(t),\ t \in [0,T], \\
& Y(0) = \Phi(x_{0}),
\end{aligned}
\right.
\end{equation}
where $H$ is given by
\begin{equation}\label{eq:Lamp_drift}
H(r) = \frac{f(\Phi^{-1}(r))}{g(\Phi^{-1}(r))} - \frac{1}{2} g'(\Phi^{-1}(r)),\ r \in \mathbb{R}.
\end{equation}
The following proposition, Proposition~\ref{prop:phiinv}, is used to transfer error estimates derived for approximations of $Y$ in~\eqref{eq:SDE-Lamp} to error estimates for approximations of the solution $X$ to~\eqref{eq:SDE}. We refer to the proof of Proposition~$1$ in~\cite{MR4737060} for a proof of Proposition~\ref{prop:phiinv}.
\begin{proposition}\label{prop:phiinv}
Suppose Assumption~\ref{ass:g} is satisfied. Then $\Phi^{-1}: \mathbb{R} \to \D$ is bounded, bijective, continuously differentiable, and has a bounded derivative. In particular, $\Phi^{-1}: \mathbb{R} \to \D$ is globally Lipschitz continuous, and we denote the Lipschitz constant of $\Phi^{-1}$ by $\Lip_{\Phi^{-1}}$.
\end{proposition}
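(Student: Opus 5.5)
We argue directly from the definition~\eqref{eq:LampTrans} of $\Phi$ and the inverse function theorem. Assumption~\ref{ass:g} gives $g > 0$ and continuous on $\D = (a,b)$. Hence $\Phi$ is $\C^{1}$ on $\D$ with $\Phi'(r) = 1/g(r) > 0$, so $\Phi$ is strictly increasing and injective on $\D$.

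The non-integrability conditions~\eqref{eq:intCond} give $\lim_{r \searrow a} \Phi(r) = -\infty$ and $\lim_{r \nearrow b} \Phi(r) = +\infty$. By continuity and the intermediate value theorem, $\Phi(\D) = \mathbb{R}$. Therefore $\Phi: \D \to \mathbb{R}$ is a bijection, and $\Phi^{-1}: \mathbb{R} \to \D$ is well defined and bijective. Boundedness is immediate, since the range $\D$ is the bounded interval $(a,b)$. Because $\Phi'$ never vanishes on $\D$, the inverse function theorem gives that $\Phi^{-1}$ is continuously differentiable, with
\begin{equation*}
(\Phi^{-1})'(y) = \frac{1}{\Phi'(\Phi^{-1}(y))} = g(\Phi^{-1}(y)),\quad y \in \mathbb{R}.
\end{equation*}

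Bounding the derivative is the one step that uses regularity up to the boundary. By Assumption~\ref{ass:g}, $g \in \C^{k+1}(\CD)$, so in particular $g$ is continuous on the compact set $\CD = [a,b]$. Hence $\sup_{r \in \CD} |g(r)| < \infty$. It follows that
\begin{equation*}
\sup_{y \in \mathbb{R}} \left| (\Phi^{-1})'(y) \right| \leq \sup_{r \in \D} g(r) < \infty.
\end{equation*}
This is the only place where we use more than positivity and continuity on the open interval. Without continuity of $g$ up to the closed interval, $g$ could blow up near $a$ or $b$, and the derivative bound would fail.

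Finally, global Lipschitz continuity follows from the mean value theorem: for $y_{1}, y_{2} \in \mathbb{R}$,
\begin{equation*}
|\Phi^{-1}(y_{1}) - \Phi^{-1}(y_{2})| \leq \sup_{r \in \D} g(r)\, |y_{1} - y_{2}|.
\end{equation*}
Thus we may take $\Lip_{\Phi^{-1}} = \sup_{r \in \D} g(r)$. I do not expect a genuine obstacle. The steps needing the most care are using~\eqref{eq:intCond} to establish surjectivity onto $\mathbb{R}$, and using continuity of $g$ on the closure to get the uniform bound on $(\Phi^{-1})'$.
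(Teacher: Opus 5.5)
Your proof is correct and matches the argument the paper relies on. The paper gives no proof itself and defers to Proposition~1 of \cite{MR4737060}, where the route is the same as yours:
\begin{itemize}
\item monotonicity of $\Phi$ from $g>0$;
\item surjectivity onto $\mathbb{R}$ from~\eqref{eq:intCond};
\item the inverse function theorem, giving $(\Phi^{-1})' = g\circ\Phi^{-1}$;
\item boundedness of this derivative from continuity of $g$ on the compact set $\CD$;
\item the Lipschitz bound via the mean value theorem.
\end{itemize}
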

We remark that $\Phi^{-1}$ actually has high-order derivatives, but Proposition~\ref{prop:phiinv} is sufficient for our purposes.

Next, the regularity assumptions imposed on $f$ and $g$ imply that the drift coefficient $H$ in~\eqref{eq:SDE-Lamp} has bounded derivatives up to order $k$. This result, stated in Proposition~\ref{prop:H}, is a key ingredient in the construction of the high-order schemes developed in this work.
\begin{proposition}\label{prop:H}
Suppose Assumptions~\ref{ass:f},~\ref{ass:g}, and~\ref{ass:fg} are satisfied. Then $H \in \C^{k}_{b}(\mathbb{R})$.
\end{proposition}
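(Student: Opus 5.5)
The plan is to write $H$ as a composition $H = h \circ \Phi^{-1}$ with
\begin{equation*}
h(x) = q(x) - \tfrac{1}{2} g'(x),\qquad q(x) = \frac{f(x)}{g(x)},\qquad x \in \D,
\end{equation*}
and to move every derivative in $y$ onto the original variable $x = \Phi^{-1}(y) \in \D$. Since $\Phi' = 1/g$ on $\D$, and $\Phi^{-1}$ is a continuously differentiable bijection $\mathbb{R} \to \D$ by Proposition~\ref{prop:phiinv}, we have $(\Phi^{-1})'(y) = g(\Phi^{-1}(y))$. So for any $u \in \C^1(\D)$ the chain rule gives
\begin{equation*}
\frac{\diff}{\diff y}\, u(\Phi^{-1}(y)) = (Lu)(\Phi^{-1}(y)),\qquad L := g \frac{\diff}{\diff x}.
\end{equation*}
Iterating, $H^{(j)}(y) = (L^j h)(\Phi^{-1}(y))$ whenever $L^j h$ makes sense. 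Hence $H \in \C^k_b(\mathbb{R})$ reduces to two facts: each $L^j h$ with $0 \le j \le k$ is continuous on $\D$, and each is bounded on $\D$. Continuity of $H^{(j)}$ then follows by composition with the continuous map $\Phi^{-1}$. Boundedness is what actually needs the assumptions.

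\emph{The diffusion part.} Here we use $L(g^{(i)}) = g\, g^{(i+1)}$. By induction, $L^j g'$ is a polynomial in $g, g', \dots, g^{(j+1)}$. For $j \le k$ this needs at most $g^{(k+1)}$, which is continuous on the compact set $\CD$ by Assumption~\ref{ass:g}. So all these terms are bounded on $\D$.

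\emph{The drift part (main step).} The only possible singularity is the division by $g$ in $q$. I would first note that $q$ is continuous on $\D$ and, by Assumption~\ref{ass:fg}, has finite limits at $a$ and $b$. It therefore extends continuously to $\CD$ and is bounded on $\D$.

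The key identity is
\begin{equation*}
Lq = g\,\frac{f' g - f g'}{g^2} = f' - q\, g',
\end{equation*}
so applying $L$ to $q$ produces no new division by $g$. Together with $L(f^{(i)}) = g f^{(i+1)}$ and $L(g^{(i)}) = g g^{(i+1)}$, this shows the following by induction on $j$:

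$L^j q$ is a polynomial in the quantities $q, f, f', \dots, f^{(j)}, g, g', \dots, g^{(j)}$.

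The inductive step just applies the Leibniz rule to $L$ acting on products of these quantities and uses the three rules above. For $j \le k$, only $f^{(i)}$ with $i \le k$ and $g^{(i)}$ with $i \le k$ appear. These are continuous on $\CD$ by Assumptions~\ref{ass:f} and~\ref{ass:g}, and $q$ is bounded, so $L^j q$ is continuous and bounded on $\D$.

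Combining the drift and diffusion parts gives $\sup_{y \in \mathbb{R}} |H^{(j)}(y)| < \infty$ for $j = 0, \dots, k$, with each $H^{(j)}$ continuous, which is the claim. I expect the main obstacle to be exactly the drift step: one must see that the division by $g$ never compounds under differentiation. The identity $Lq = f' - q g'$ resolves this. Without it, the naive derivative $(f/g)^{(j)}$ carries $g^{-j-1}$ and looks unbounded near $\partial\D$, where $g$ vanishes.
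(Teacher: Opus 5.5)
Your proof is correct and follows the route the paper indicates. The paper gives no details; it only says the result is the higher-order extension of the first-order computation $H' = \bigl(f' - \tfrac{f}{g}g' - \tfrac12 g g''\bigr)\circ\Phi^{-1}$ in the cited earlier work, and your operator $L = g\,\frac{\diff}{\diff x}$ with the closure identity $Lq = f' - q g'$ is exactly that computation carried out systematically by induction, making explicit that only boundedness of $f/g$ on $\D$ and continuity of $f^{(i)}$, $g^{(i+1)}$ ($i \le k$) on $\CD$ are used.
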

The proof of Proposition~\ref{prop:H} is the high-order extension of Proposition~$2$ in~\cite{MR4737060}, and consists of computing high-order derivatives of $H$. Proposition~\ref{prop:H} implies, in particular, that the solution $Y$ of the SDE in~\eqref{eq:SDE-Lamp} exists, is unique, and satisfies
\begin{equation}\label{eq:Yt_bounded}
\PP \left( Y(t) \in \mathbb{R},\ \forall t \in [0,T] \right) = 1.
\end{equation}
Indeed, by applying the triangle inequality to (the integral version of)~\eqref{eq:SDE-Lamp}, we have the bound
\begin{equation*}
\sup_{t \in [0,T]} |Y(t)| \leq C + \sup_{t \in [0,T]} |B(t)| < \infty,
\end{equation*}
almost surely, since $H \in \C^{k}_{b}(\mathbb{R})$ is bounded by Proposition~\ref{prop:H}. Furthermore, combining this with Proposition~\ref{prop:phiinv} gives us that the solution $X = \Phi^{-1}(Y)$ of the SDE in~\eqref{eq:SDE} exists, is unique, and satisfies
\begin{equation*}
\PP \left( X(t) \in \D,\ \forall t \in [0,T] \right) = 1.
\end{equation*}

The idea of the proposed schemes is to approximate the solution of~\eqref{eq:SDE-Lamp}, and then apply the inverse of the Lamperti transform to obtain an approximation of the solution $X$ of~\eqref{eq:SDE}. To this end, we introduce a discretisation parameter $M \in \mathbb{N}$, and partition the time interval into $[t_{m},t_{m+1}]$, for $m=0,\ldots,M$, each of size $\Delta t = T/M$. We impose the following assumptions in order to state the strong convergence result of this section.
\begin{assumption}\label{ass:globalStrongInt}
There exists an approximating sequence $Y_{0},\ldots,Y_{M}$ of the solution $Y$ of~\eqref{eq:SDE-Lamp} on the time grid $t_{0},\ldots,t_{M}$ such that
\begin{equation*}
\PP (Y_{m} \in \mathbb{R},\ \forall m = 0,\ldots,M) = 1,
\end{equation*}
and, for every $p \in (0,\infty)$, is $p$-strongly convergent with order $\nu$:
\begin{equation*}
\left( \E \left[ \sup_{m=0, \ldots, M} | Y_{m} - Y(t_{m}) |^{p} \right] \right)^{\frac{1}{p}} \leq C \Delta t^{\nu},
\end{equation*}
for some $\nu>0$ and some constant $C>0$ independent of $\Delta t$.
\end{assumption}
Of particular interest to us is that some Itô--Taylor-$\gamma$ schemes (see Section~\ref{sec:IT_general}) satisfy Assumption~\ref{ass:globalStrongInt} (see Proposition~\ref{prop:IT_conv_add}). 

The following is our main general theorem.
\begin{theorem}\label{theo:mainStrong}
Let $M\in\N$, $T > 0$, $\Delta t = T/M$ and let $x_{0} \in \D$.
Suppose Assumptions~\ref{ass:f},~\ref{ass:g},~\ref{ass:fg}, and ~\ref{ass:globalStrongInt} are satisfied. Let $X_{m} = \Phi^{-1}(Y_{m})$, for $m=0,\ldots,M$, where $Y_{m}$ is defined by Assumption~\ref{ass:globalStrongInt}, and let $X$ be the exact solution of the considered SDE in equation~\eqref{eq:SDE}. Then
\begin{equation*}
\PP (X_{m} \in \D,\ \forall m = 0,\ldots,M) = 1,
\end{equation*}
and, for every $p \in (0,\infty)$, it holds
\begin{equation*}
\left( \E \left[ \sup_{m=0, \ldots, M} | X_{m} - X(t_{m}) |^{p} \right] \right)^{\frac{1}{p}} \leq C \Delta t^{\nu},
\end{equation*}
where the constant $C>0$ does not depend on $\Delta t$.
\end{theorem}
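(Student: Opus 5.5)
The plan is to deduce both claims directly from Proposition~\ref{prop:phiinv} applied pathwise, using the representation $X(t) = \Phi^{-1}(Y(t))$ established above. This representation holds because $\Phi$ is a $\C^{2}$ bijection from $\D$ onto $\mathbb{R}$: it is strictly increasing since $g>0$ on $\D$, and it is onto $\mathbb{R}$ by the non-integrability conditions~\eqref{eq:intCond}. Itô's formula then shows that $\Phi(X)$ solves~\eqref{eq:SDE-Lamp}, and uniqueness of that SDE, which follows from $H \in \C^{k}_{b}(\mathbb{R})$ by Proposition~\ref{prop:H}, identifies it with $Y$.

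\textbf{Boundary preservation.} By Assumption~\ref{ass:globalStrongInt} there is an event of probability one on which $Y_{m} \in \mathbb{R}$ for all $m = 0,\ldots,M$. By Proposition~\ref{prop:phiinv}, $\Phi^{-1}$ maps $\mathbb{R}$ into $\D$. Hence on this event $X_{m} = \Phi^{-1}(Y_{m}) \in \D$ for every $m$, which gives the first claim. No property of the scheme beyond finiteness of the iterates is needed here.

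\textbf{Strong error.} Fix $p \in (0,\infty)$. Proposition~\ref{prop:phiinv} gives that $\Phi^{-1}$ is globally Lipschitz with constant $\Lip_{\Phi^{-1}}$. Therefore, almost surely,
\begin{equation*}
\sup_{m=0,\ldots,M} |X_{m} - X(t_{m})| = \sup_{m=0,\ldots,M} |\Phi^{-1}(Y_{m}) - \Phi^{-1}(Y(t_{m}))| \leq \Lip_{\Phi^{-1}} \sup_{m=0,\ldots,M} |Y_{m} - Y(t_{m})|.
\end{equation*}
Next I raise this to the power $p$, take expectations and then the $1/p$-th root. This is legitimate for every $p>0$, including $p<1$, because the map $z \mapsto z^{p}$ is monotone on $[0,\infty)$ and no Minkowski inequality is used. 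The strong convergence bound in Assumption~\ref{ass:globalStrongInt} then yields
\begin{equation*}
\left( \E \left[ \sup_{m} |X_{m}-X(t_{m})|^{p} \right] \right)^{1/p} \leq \Lip_{\Phi^{-1}}\, C \Delta t^{\nu}.
\end{equation*}
The resulting constant is $\Lip_{\Phi^{-1}} C$, which is independent of $\Delta t$.

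The only point requiring care is the identification $X = \Phi^{-1}(Y)$. It relies on applying Itô's formula to $\Phi$, which is only defined on $\D$, so one needs the invariance~\eqref{eq:invDom}, or a localisation by stopping times at level sets approaching $a$ and $b$. I would handle this by the construction already described before the theorem: start from $Y$, which exists on all of $\mathbb{R}$ and stays finite by~\eqref{eq:Yt_bounded}, define $\tilde X = \Phi^{-1}(Y) \in \D$, and verify via Itô's formula for the $\C^{2}$ map $\Phi^{-1}$ that $\tilde X$ solves~\eqref{eq:SDE}. Pathwise uniqueness of~\eqref{eq:SDE} then gives $\tilde X = X$; it holds because $f$ and $g$ are $\C^{1}$ on the compact set $\CD$, hence Lipschitz there, and all solutions stay in $\D$. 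Once this is settled, everything else is the one-line Lipschitz argument above.
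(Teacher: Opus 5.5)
Your proposal is correct and follows the paper's proof: boundary preservation comes from $\Phi^{-1}(\mathbb{R}) \subseteq \D$ together with the almost-sure finiteness of the $Y_{m}$, and the error bound comes from the global Lipschitz continuity of $\Phi^{-1}$ combined with Assumption~\ref{ass:globalStrongInt}. The differences are cosmetic. You apply that assumption directly for each $p\in(0,\infty)$ rather than splitting into $p\geq 2$ plus a Hölder step for $p<2$, and that split is indeed unnecessary here. You also spell out the identification $X=\Phi^{-1}(Y)$, which the paper takes from the discussion preceding the theorem.
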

\begin{proof}
The first statement follows from $X_{m} = \Phi^{-1}(Y_{m})$, for $m=0,\ldots,M$, that $\Phi$ is a bijection, and the first property in Assumption~\ref{ass:globalStrongInt}. For the error estimate, let us first assume that $p \in [2,\infty)$. We reduce the statement to the corresponding one for $Y$
\begin{equation*}
| X_{m} - X(t_{m}) | = | \Phi^{-1}(Y_{m}) - \Phi^{-1}(Y(t_{m})) | \leq \Lip_{\Phi^{-1}} |Y_{m} - Y(t_{m}) |,
\end{equation*}
and using Assumption~\ref{ass:globalStrongInt} to obtain the desired estimate
\begin{align*}
\E \left[ \sup_{m=0, \ldots, M} | X_{m} - X(t_{m}) |^{p} \right] &\leq \Lip_{\Phi^{-1}}^{p} \E \left[ \sup_{m=0, \ldots, M} | Y_{m} - Y(t_{m}) |^{p} \right] \\ &\leq C \Delta t^{p \nu}.
\end{align*}
The case $p \in (0,2)$ follows from the $p=2$ case combined with Hölder's inequality.
\end{proof}

\section{Lamperti--Itô--Taylor schemes}\label{sec:num_schemes}
In this section, we construct boundary-preserving numerical schemes, that we refer to as Lamperti--Ito--Taylor schemes, to approximate the solution $X$ of~\eqref{eq:SDE} based on the following:
\begin{enumerate}
\item \textbf{Use the Lamperti transform $\Phi$:} The transformed process $Y(t) = \Phi(X(t))$ satisfies 
\begin{equation}\label{eq:LITsdeAdd}
\left\lbrace
\begin{aligned}
& \diff Y(t) = H(Y(t)) \diff t + \diff B(t),\ t \in [0,T],\\
& Y(0) = \Phi(x_{0}).
\end{aligned}
\right.
\end{equation}
\item \textbf{Apply the Itô--Taylor scheme of strong order $\gamma$:} Approximate the solution $Y$ of~\eqref{eq:LITsdeAdd} on the time grid $0=t_{0},\ldots,t_{M}=T$ using the Itô--Taylor scheme $Y_{0},\ldots,Y_{M}$ of strong convergence order $\gamma$.
\item \textbf{Use the inverse Lamperti transform $\Phi^{-1}$:} $\Phi^{-1}(Y_{0}),\ldots,\Phi^{-1}(Y_{M})$ is an approximation $X$ on the time grid $0=t_{0},\ldots,t_{M}=T$ that is boundary-preserving and $p$-strongly convergent of order $\gamma$.
\end{enumerate}
We next discuss $(2)$, since $(1)$ and $(3)$ are self-explanatory. In the following, we give an overview of how Itô--Taylor schemes are constructed, and how they simplify for the SDE with additive noise in~\eqref{eq:LITsdeAdd}.

\subsection{General Itô--Taylor schemes}\label{sec:IT_general}
The Itô--Taylor schemes are the generalisations of Taylor schemes for ODEs to SDEs, and they differ because Taylor expansions in differential calculus are replaced with Itô's lemma in stochastic calculus. Similarly to Taylor expansions, we will use integration variables $s, s_{1},s_{2},\ldots$ to not confuse them with the time grid $t_{0},\ldots,t_{M}$. The content presented in this section can be found in classical books on numerical methods for SDEs, for example \cite{MR1214374}.

Itô--Taylor schemes are obtained by truncating Itô--Taylor expansions at certain levels. We start by describing Itô--Taylor expansions, and then we show how the truncation is done. Let us consider an SDE of the form
\begin{equation}\label{eq:SDE_IT_gen}
\left\lbrace
\begin{aligned}
& \diff Z(s) = \mu(Z(s)) \diff s + \sigma(Z(s)) \diff B(s),\ s \in [0,T],\\
& Z(0) = z_{0} \in \mathbb{R},
\end{aligned}
\right.
\end{equation}
with $\mu,\sigma: \mathbb{R} \to \mathbb{R}$ sufficiently regular, or equivalently in integrated form
\begin{equation*}
Z(s) = z_{0} + \int_{0}^{s} \mu(Z(s_{1}))) \diff s_{1} + \int_{0}^{s} \sigma(Z(s_{1})) \diff B(s_{1}),\ s \in [0,T].
\end{equation*}
We use $\mu$ and $\sigma$ to denote the drift and diffusion coefficients and $Z(s)$ to denote the solution process in~\eqref{eq:SDE_IT_gen} to not confuse it with~\eqref{eq:SDE}. If not otherwise stated, we assume that $s \in [0,T]$ and that $r \in \mathbb{R}$ in this section.

The key ingredient in Itô--Taylor expansions is Itô's lemma that states that the process $F(Z(s))$, for $F : \mathbb{R} \to \mathbb{R}$ being sufficiently smooth, satisfies the following SDE
\begin{align*}
\diff F(Z(s)) &= \left( \mu(Z(s)) F'(Z(s)) + \frac{1}{2} \sigma^{2}(Z(s)) F''(Z(s)) \right) \diff s + \sigma(Z(s)) F'(X(s)) \diff B(s) \\ &= \mathcal{L}^{0}[F](Z(s)) \diff s + \mathcal{L}^{1}[F](Z(s)) \diff B(s),
\end{align*}
where we introduced
\begin{equation}\label{eq:LoDef}
\mathcal{L}^{0}[F](r) = \mu(r) F'(r) + \frac{1}{2} g^{2}(r) F''(r)
\end{equation}
and
\begin{equation}\label{eq:LiDef}
\mathcal{L}^{1}[F](r) = \sigma(r) F'(r).
\end{equation}
We integrate the above over $[s_{0},s] \subset [0,T]$ to obtain
\begin{equation}\label{eq:ItoTaylorF}
F(Z(s)) = F(Z(s_{0})) + \int_{s_{0}}^{s} \mathcal{L}^{0}[F](Z(s_{1})) \diff s_{1} + \int_{s_{0}}^{s} \mathcal{L}^{1}[F](Z(s_{1})) \diff B(s_{1}),
\end{equation}
which forms the basis of the Itô--Taylor expansion. The time point $s_{0}$ is the reference point around which we expand the solution $Z(s)$, for $s \geq s_{0}$.

Next, we apply~\eqref{eq:ItoTaylorF} to $\Lo[F](Z(s_{1}))$  to obtain
\begin{equation*}
\Lo[F](Z(s_{1})) = \Lo[F](Z(s_{0})) + \int_{s_{0}}^{s_{1}} \Lo \Lo[F](Z(s_{2})) \diff s_{2} + \int_{s_{0}}^{s_{1}} \Li \Lo[F](Z(s_{2})) \diff B(s_{2})
\end{equation*}
and to $\Li[F](Z(s_{1}))$ to obtain
\begin{equation*}
\Li[F](Z(s_{1})) = \Li[F](Z(s_{0})) + \int_{s_{0}}^{s_{1}} \Lo \Li[F](Z(s_{2})) \diff s_{2} + \int_{t_{0}}^{s_{1}} \Li \Li[F](Z(s_{2})) \diff B(s_{2}).
\end{equation*}
Thus, inserting these formulas back into~\eqref{eq:ItoTaylorF} gives
\begin{equation}\label{eq:EM_scheme_with_RM}
  \begin{split}
    F(Z(s)) &= F(Z(s_{0})) + \Lo[F](Z(s_{0})) (s - s_{0}) + \Li[F](Z(s_{0})) (B(s)-B(s_{0})) \\ &+ \int_{s_{0}}^{s} \int_{s_{0}}^{s_{1}} \Lo \Lo[F](Z(s_{2})) \diff s_{2} \diff s_{1} \\ & + \int_{s_{0}}^{s} \int_{s_{0}}^{s_{1}} \Li \Lo[F](Z(s_{2})) \diff B(s_{2}) \diff s_{1} \\ &+ \int_{s_{0}}^{s} \int_{s_{0}}^{s_{1}} \Lo \Li[F](Z(s_{2})) \diff s_{2} \diff B(s_{1}) \\ & + \int_{s_{0}}^{s} \int_{s_{0}}^{s_{1}} \Li \Li[F](Z(s_{2})) \diff B(s_{2}) \diff B(s_{1}).
  \end{split}
\end{equation}
We refer to~\eqref{eq:EM_scheme_with_RM} as the Ito--Taylor expansion of order $1/2$. In principle, we could repeat the above strategy to obtain Itô--Taylor expansions of arbitrarily high order (provided that $\mu$ and $\sigma$ are regular enough). We can, however, write~\eqref{eq:EM_scheme_with_RM} in a more compact way, and that makes the repeated use of Itô's lemma in~\eqref{eq:ItoTaylorF} less cumbersome. We introduce multi-indexed functions using the recursion
\begin{equation}\label{eq:DefMultIndCoefFunc}
F_{\alpha} = \LL^{\alpha_{1}} [F_{-\alpha}],
\end{equation}
and multi-indexed integrals by the following recursion
\begin{equation}\label{eq:DefMultIndInt}
I_{\alpha}[F]_{s,t} = \int_{s}^{t}  I_{\alpha-}[F]_{s,s_{1}} \diff B^{\alpha_{L}}(s_{1}),\ 0 \leq s \leq t \leq T,
\end{equation}
where $\alpha = (\alpha_{1},\ldots,\alpha_{L}) \in \{ 0,1 \}^{L}$, $-\alpha = (\alpha_{2},\ldots,\alpha_{L})$ removes the first index, $\alpha- = (\alpha_{1},\ldots,\alpha_{L-1})$ removes the last index, and where $B^{0}(s_{1}) = s_{1}$ and $B^{1}(s_{1}) = B(s_{1})$. We initialise the above with
\begin{equation*}
F_{\emptyset} = F
\end{equation*}
and
\begin{equation*}
I_{\emptyset} [F]_{s,t} = F(t).
\end{equation*}
For notational convenience, we also introduce the short version
\begin{equation}\label{eq:Ialpha_simple_recur}
I_{\alpha}(s,t) = \int_{s}^{t} I_{\alpha-}(s,s_{1}) \diff B^{\alpha_{L}}(s_{1}),\ 0 \leq s \leq t \leq T,
\end{equation}
initialised with $I_{\emptyset}(s,t) = 1$, for $I_{\alpha}[1]_{s,t}$. The following lemma will be used to show that the $\IT \gamma$ schemes only take values in $\mathbb{R}$.
\begin{lemma}\label{lem:Ialpha_bounded}
For every $0 \leq s \leq t \leq T$ and for every multi-index $\alpha \in \{ 0,1 \}^{L}$,
\begin{equation*}
\PP (I_{\alpha}(s,t) \in \mathbb{R}) = 1.
\end{equation*}
\end{lemma}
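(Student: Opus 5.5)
We argue by induction on the length $L$ of the multi-index $\alpha$, with a slightly stronger hypothesis that makes the induction close. For fixed $s$, we show that the process $u \mapsto I_{\alpha}(s,u)$, $u \in [s,T]$, has a version that is continuous and adapted. We also show that it satisfies $\E\bigl[\sup_{u \in [s,T]} |I_{\alpha}(s,u)|^{2}\bigr] < \infty$. Finiteness almost surely, that is $\PP(I_{\alpha}(s,t) \in \mathbb{R}) = 1$, then follows immediately, since a random variable with finite second moment is almost surely finite.

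\textbf{Base case.} For $L = 0$ we have $I_{\emptyset}(s,u) = 1$, which is deterministic, continuous and bounded.

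\textbf{Inductive step.} Suppose the claim holds for $\alpha-$, and write $J(u) = I_{\alpha-}(s,u)$. We split into two cases according to the last index $\alpha_{L}$.
\begin{enumerate}
\item If $\alpha_{L} = 0$, then $I_{\alpha}(s,u) = \int_{s}^{u} J(s_{1}) \diff s_{1}$. This is a pathwise Lebesgue integral of a continuous function, so it is continuous and adapted. By the Cauchy--Schwarz inequality,
\begin{equation*}
\sup_{u \in [s,T]} |I_{\alpha}(s,u)|^{2} \leq (T-s) \int_{s}^{T} |J(s_{1})|^{2} \diff s_{1} \leq (T-s)^{2} \sup_{s_{1} \in [s,T]} |J(s_{1})|^{2}.
\end{equation*}
Its expectation is therefore finite by the inductive hypothesis.
\item If $\alpha_{L} = 1$, then $I_{\alpha}(s,u) = \int_{s}^{u} J(s_{1}) \diff B(s_{1})$. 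The integrand is adapted and continuous, and $\E \int_{s}^{T} |J|^{2} \diff s_{1} < \infty$. Hence the Itô integral is well defined as a continuous square-integrable martingale. Doob's maximal inequality together with the Itô isometry gives
\begin{equation*}
\E\Bigl[\sup_{u \in [s,T]} |I_{\alpha}(s,u)|^{2}\Bigr] \leq 4\, \E \int_{s}^{T} |J(s_{1})|^{2} \diff s_{1} \leq 4 (T-s)\, \E\Bigl[\sup_{s_{1}} |J(s_{1})|^{2}\Bigr] < \infty.
\end{equation*}
\end{enumerate}

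The main point needing care is the stochastic-integral case. Here the inductive hypothesis must supply adaptedness and square integrability of the integrand, not merely almost sure finiteness, since that is what makes $I_{\alpha}$ well defined at all. This is exactly why the stronger statement is carried through the induction.
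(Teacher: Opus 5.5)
Your proof is correct and follows the same route as the paper: induction on the length of $\alpha$, splitting on the last index, with Jensen/Cauchy--Schwarz when the last index is $0$ and the It\^o isometry when it is $1$. Your stronger inductive hypothesis (a continuous adapted version with $\E[\sup_{u\in[s,T]}|I_{\alpha}(s,u)|^{2}]<\infty$, propagated via Doob's inequality) genuinely tightens the argument. The paper only assumes $\E[|I_{\alpha-}(s,s_{1})|^{2}]<\infty$ for each $s_{1}$, which does not by itself guarantee that $\int_{s}^{t}\E[|I_{\alpha-}(s,s_{1})|^{2}]\diff s_{1}$ is finite or that the integrand is adapted, and the paper uses both facts tacitly.
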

\begin{proof}
We prove that
\begin{equation*}
\E \left[ \left| I_{\alpha}(s,t) \right|^{2} \right] < \infty,
\end{equation*}
which implies in particular the statement of the lemma. We prove the statement by induction over the length $\ell(\alpha)$ of the multi-index $\alpha$. The base case is immediate:
\begin{equation*}
I_{\emptyset}(s,t) = 1
\end{equation*}
is bounded. Suppose that
\begin{equation*}
\E \left[ \left| I_{\alpha}(s,t) \right|^{2} \right] < \infty
\end{equation*}
for all multi-indices of length $\ell(\alpha) \leq k$. Let now $\alpha$ be such that $\ell(\alpha) = k+1$. If $\alpha_{k+1} = 0$, then Jensen's inequality for integrals applied to~\eqref{eq:Ialpha_simple_recur} gives us
\begin{equation*}
\E \left[ \left| I_{\alpha}(s,t) \right|^{2} \right] \leq (t-s) \int_{s}^{t} \E \left[ \left| I_{\alpha-}(s,s_{1}) \right|^{2} \right] \diff s_{1} < \infty,
\end{equation*}
where we also used the induction hypothesis on $\alpha-$ (since $\ell(\alpha-) = k$). If $\alpha_{k+1}=1$, then Itô's isometry applied to~\eqref{eq:Ialpha_simple_recur} gives us
\begin{equation*}
\E \left[ \left| I_{\alpha}(s,t) \right|^{2} \right] = \int_{s}^{t} \E \left[ \left| I_{\alpha-}(s,s_{1}) \right|^{2} \right] \diff s_{1} < \infty,
\end{equation*}
where we again used the induction hypothesis for $\alpha-$ (since $\ell(\alpha)=k$). Therefore,
\begin{equation*}
\E \left[ \left| I_{\alpha}(s,t) \right|^{2} \right] < \infty
\end{equation*}
for all multi-indices $\alpha$ of length $\ell(\alpha) \leq k +1$. We conclude that
\begin{equation*}
\E \left[ \left| I_{\alpha}(s,t) \right|^{2} \right] < \infty,
\end{equation*}
and the statement follows.
\end{proof}

Let us unwind the above definition. The terms relevant to~\eqref{eq:EM_scheme_with_RM} are the following
\begin{equation*}
F_{\emptyset} = F,\ F_{(0)} = \Lo[F],\ F_{(1)} = \Li[F],
\end{equation*}
\begin{equation*}
I_{\emptyset}(s,t) = 1,\ I_{(0)}(s,t) = t - s,\ I_{(1)}(s,t) = B(t) - B(s),
\end{equation*}
\begin{equation*}
I_{(0,0)}[F_{(0,0)}(Z)]_{s,t} = \int_{s}^{t} \int_{s}^{t} \Lo \Lo [F](Z(s_{2})) \diff s_{2} \diff s_{1},
\end{equation*}
\begin{equation*}
I_{(1,0)}[F_{(1,0)}(Z)]_{s,t} = \int_{s}^{t} \int_{s}^{t} \Li \Lo [F](Z(s_{0})) \diff B(s_{2}) \diff s_{1},
\end{equation*}
\begin{equation*}
I_{(0,1)}[F_{(0,1)}(Z)]_{s,t} = \int_{s}^{t} \int_{s}^{t} \Lo \Li [F](Z(s_{0})) \diff s_{2} \diff B(t_{1}),
\end{equation*}
and
\begin{equation*}
I_{(1,1)}[F_{(1,1)}(Z)]_{s,t} = \int_{s}^{t} \int_{s}^{t} \Li \Li [F](Z(s_{2})) \diff B(s_{2}) \diff B(s_{1}).
\end{equation*}

Using the above formulas, we may rewrite the expansion in~\eqref{eq:EM_scheme_with_RM} in the more compact form as
\begin{equation}\label{eq:IT_F_EM}
  \begin{split}
    F(Z(s)) &= F_{\emptyset}(Z(s_{0})) I_{\emptyset}(s_{0},s) + F_{(0)}(Z(s_{0})) I_{(0)}(s_{0},s) + F_{(1)}(Z(s_{0})) I_{(1)}(s_{0},s) \\ &+ I_{(0,0)}[F_{(0,0)}(Z)]_{s_{0},s} + I_{(1,0)}[F_{(1,0)}(X)]_{s_{0},s} + I_{(0,1)}[F_{(0,1)}(Z)]_{s_{0},s} \\ &+ I_{(1,1)}[F_{(1,1)}(Z)]_{s_{0},s},\ 0 \leq s_{0} \leq s \leq T.
  \end{split}
\end{equation}
In fact, by introducing the sets
\begin{equation}\label{eq:index_A_EM}
A_{0.5} = \{ \emptyset,\ (0),\ (1) \}
\end{equation}
and
\begin{equation}\label{eq:index_B_EM}
B(A_{0.5}) = \{ \alpha \not\in A_{0.5}:\ -\alpha \in A_{0.5} \} = \{ (0,0),\ (0,1),\ (1,1) \},
\end{equation}
we may write this more concisely as
\begin{equation}\label{eq:IT_EM}
F(Z(s)) = \sum_{\alpha \in A_{0.5}} F_{\alpha}(Z(s_{0})) I_{\alpha}(s_{0},s) + \sum_{\alpha \in B(A_{0.5})} I_{\alpha}[F_{\alpha}(Z)]_{s_{0},s},\ 0 \leq s_{0} \leq s \leq T.
\end{equation}
The first sum in~\eqref{eq:IT_EM} is the main term, or approximating term, and the second sum is the remainder term. $A_{0.5}$ is a so-called hierarchical set and $B(A_{0.5})$ is called the boundary set or remainder set of $A_{0.5}$. A finite set $\Gamma$ is called a hierarchical set if $\emptyset \in \Gamma$ and if $\emptyset \neq \alpha \in \Gamma \Rightarrow -\alpha \in \Gamma$. The set $A_{0.5}$ is the set of multi-indices that correspond to the classical Euler-Maruyama (EM) scheme~\cite{MR71666}. The above sum can seem overly complicated, but it is convenient for Itô-Taylor expansions of high-orders.

We obtained the formula in~\eqref{eq:IT_EM} using the Itô--Taylor expansion in~\eqref{eq:EM_scheme_with_RM}, but this can be partly avoided by rewriting Itô's lemma in~\eqref{eq:ItoTaylorF} in terms of the multi-indexed functions in~\eqref{eq:DefMultIndCoefFunc} and in terms of the multi-indexed integrals~\eqref{eq:DefMultIndInt}. To this end, we insert $F_{\alpha}$ into~\eqref{eq:ItoTaylorF} to obtain
\begin{align*}
F_{\alpha}(Z(s)) &= F_{\alpha}(Z(s_{0})) + \int_{s_{0}}^{s} \Lo[F_{\alpha}](Z(s_{1})) \diff s_{1} + \int_{s_{0}}^{s} \Li[F_{\alpha}](Z(s_{1})) \diff B(s_{1}) \\ &= F_{\alpha}(Z(s_{0})) + \int_{s_{0}}^{s} F_{(0,\alpha)}(Z(s_{1})) \diff s_{1} + \int_{s_{0}}^{s} F_{(1,\alpha)}(Z(s_{1})) \diff B(s_{1}).
\end{align*}
Next, applying $I_{\alpha}[.]_{s_{0},s}$ to both side of the above yields
\begin{equation}\label{eq:IaF}
I_{\alpha}[F_{\alpha}(Z)]_{s_{0},s} = F_{\alpha}(Z(s_{0})) I_{\alpha}(s_{0},s) + I_{(0,\alpha)}[F_{(0,\alpha)}(Z)]_{s_{0},s} + I_{(1,\alpha)}[F_{(1,\alpha)}(Z)]_{s_{0},s},
\end{equation}
which describes how the order of the expansion is increased using only the compact multi-index notation. Let us now introduce another hierarchical set
\begin{equation}\label{eq:index_A_Milstein}
A_{1} = \{ \emptyset,\ (0),\ (1),\ (1,1) \},
\end{equation}
and the corresponding boundary set
\begin{equation}\label{eq:index_B_Milstein}
B(A_{1}) = \{ \alpha \not\in A_{1}:\ -\alpha \in A_{1} \} = \{ (0,0),\ (0,1),\ (0,1,1),\ (1,1,1) \}.
\end{equation}
The set $A_{1}$ is the set of multi-indices that correspond to the classical Milstein scheme~\cite{MR356225}. The only difference between $A_{0.5}$ in~\eqref{eq:index_A_EM} and $A_{1}$ in~\eqref{eq:index_A_Milstein} is the multi-index $(1,1)$. Thus, to obtain the analogous expression as~\eqref{eq:IT_EM} with $A_{1}$ and $B(A_{1})$, we use~\eqref{eq:IaF} with $\alpha = (1,1)$
\begin{equation*}
I_{(1,1)}[F_{(1,1)}]_{s_{0},s} = F_{(1,1)}(X(s_{0})) I_{(1,1)}(s_{0},s) + I_{(0,1,1)} [F_{(0,1,1)}(X)]_{s_{0},s} + I_{(1,1,1)}[F_{(1,1,1)}(X)]_{s_{0},s}.
\end{equation*}
By inserting this into~\eqref{eq:IT_F_EM} and comparing with~\eqref{eq:index_A_Milstein} and~\eqref{eq:index_B_Milstein}, we obtain
\begin{equation}\label{eq:IT_Milstein}
F(Z(s)) = \sum_{\alpha \in A_{1}} F_{\alpha}(Z(s_{0})) I_{\alpha}(s_{0},s) + \sum_{\alpha \in B(A_{1})} I_{\alpha}[F_{\alpha}(Z)]_{s_{0},s},\ 0 \leq s_{0} \leq s \leq T,
\end{equation}
which is the Itô--Taylor expansion of order $1$. 

The Euler--Maruyama set $A_{0.5}$, corresponding to strong convergence of order $0.5$, and the Milstein set $A_{1}$, corresponding to strong convergence of order $1$, can be generalised to $A_{\gamma}$ for $2 \gamma \in \mathbb{N}$. To this end, for a multi-index $\alpha = (\alpha_{1},\ldots,\alpha_{L}) \in \{0,1 \}^{L}$, we let $\ell(\alpha) = L$ denote the length of $\alpha$ and $n(\alpha)$ denote the number of zeros of $\alpha$. We define
\begin{equation}\label{eq:def_Agamma}
A_{\gamma} = \{ \alpha:\ \ell(\alpha) + n(\alpha) \leq 2 \gamma \text{ or } \ell(\alpha) = n(\alpha) = \gamma + 1/2 \},
\end{equation}
for $2 \gamma \in \mathbb{N}$, and with corresponding remainder set
\begin{equation*}
B(A_{\gamma}) = \{ \alpha \not\in A_{\gamma}:\ -\alpha \in A_{\gamma} \}.
\end{equation*}
Note that this coincides with the previous definitions of $A_{0,5}, B(A_{0.5}), A_{1}$, and $B(A_{1})$, and that $|A_{\gamma}|, |B(A_{\gamma})| < \infty$ for all $2 \gamma \in \mathbb{N}$. Similarly to~\eqref{eq:IT_F_EM} and~\eqref{eq:IT_Milstein}, the Itô--Taylor expansion of $F(Z(s))$ of order $\gamma$ around $s_{0}$ is then given by
\begin{equation}\label{eq:IT_gen}
F(Z(s)) = \sum_{\alpha \in A_{\gamma}} F_{\alpha}(Z(s_{0})) I_{\alpha}(s_{0},s) + \sum_{\alpha \in B(A_{\gamma})} I_{\alpha}[F_{\alpha}(Z)]_{s_{0},s},\ 0 \leq s_{0} \leq s \leq T.
\end{equation}
We now define the time-continuous Itô--Taylor approximation of order $\gamma$. To this end, we insert $F = \id$, with $\id(r) = r$ for $r \in \mathbb{R}$, in~\eqref{eq:IT_gen}, and use the equivalent formulation for the discrete time grid $0=t_{0} < \ldots < t_{M} = T$, to obtain
\begin{equation}\label{eq:IT_gen_v2}
Z(s) = Z(t_{m}) + \sum_{\alpha \in A_{\gamma} \setminus \{  \emptyset \}} \id_{\alpha}(Z(t_{m})) I_{\alpha}(t_{m},s) + \sum_{\alpha \in B(A_{\gamma})} I_{\alpha}[\id_{\alpha}(Z)]_{t_{m},s},\ s \in [t_{m},t_{m+1}],
\end{equation}
where we excluded the term $\id_{\emptyset}(Z(t_{m}))$ (corresponding to $\alpha = \emptyset$) from the first sum to make it more similar to typical numerical schemes. We discard the remainder term in~\eqref{eq:IT_gen_v2} to obtain the time-continuous recursive Itô--Taylor approximation of strong order $\gamma \in \{0.5,1,1.5,\ldots \}$
\begin{equation}\label{eq:IT_approx}
Z^{\IT \gamma}(s) = Z^{\IT \gamma}(t_{m}) + \sum_{\alpha \in A_{\gamma} \setminus \{  \emptyset \}} \id_{\alpha}(Z^{\IT \gamma}(t_{m})) I_{\alpha}(t_{m},s),\ s \in [t_{m},t_{m+1}],
\end{equation}
initialised with $Z^{\IT \gamma}(0) = z_{0}$. The following proposition quantifies the introduced error by discarding the remainder term in the Itô--Taylor expansion in~\eqref{eq:IT_gen_v2}.
\begin{proposition}\label{prop:ITexpConv}
Let $\gamma \in \{0.5,1,1.5,\ldots \}$ and suppose that
\begin{equation*}
\left| F_{\alpha}(r_{1}) - F_{\alpha}(r_{2}) \right| \leq \Lip_{F_{\alpha}} \left| r_{1}-r_{2} \right|,\ \forall r_{1},r_{2} \in \mathbb{R},
\end{equation*}
for all $\alpha \in A_{\gamma}$ and that 
\begin{equation*}
\left| F_{\alpha}(r) \right| \leq L_{F_{\alpha}} \left( 1 + \left| r \right| \right),\ \forall r \in \mathbb{R},
\end{equation*}
for all $\alpha \in B(A_{\gamma})$. Then, for every $p \in (0,\infty)$, it holds
\begin{equation*}
\left( \E \left[ \sup_{s \in [0,T]} \left| Z^{\IT \gamma}(s) - Z(s) \right|^{p} \right] \right)^{1/p} \leq C \Delta t^{\gamma},
\end{equation*}
where the constant $C>0$ is independent of $\Delta t$.
\end{proposition}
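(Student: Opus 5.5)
This is the classical strong convergence theorem for Itô--Taylor schemes (Kloeden--Platen, Theorem 10.6.3). I would follow that argument, with $F=\id$ so that $F_\alpha=\id_\alpha$.

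\textbf{Setup.} First I would establish uniform moment bounds for both processes:
\begin{equation*}
\E\bigl[\sup_{s\le T}|Z(s)|^{q}\bigr]<\infty, \qquad \E\bigl[\sup_{s\le T}|Z^{\IT\gamma}(s)|^{q}\bigr]<\infty,
\end{equation*}
for every $q\ge 2$, uniformly in $\Delta t$. These follow from the linear growth of the coefficients $\id_\alpha$, which is implied by the Lipschitz assumption for $\alpha\in A_\gamma$. The tools are the Burkholder--Davis--Gundy inequality, Hölder's inequality, and Gronwall's lemma, together with moment bounds for the $I_\alpha$ in the spirit of Lemma~\ref{lem:Ialpha_bounded}. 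I would treat $p\ge 2$ and recover $p\in(0,2)$ by Hölder's inequality, as in Theorem~\ref{theo:mainStrong}.

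Write $n_s=\max\{m: t_m\le s\}$. Summing \eqref{eq:IT_gen_v2} and \eqref{eq:IT_approx} over the steps gives the global representation
\begin{equation*}
Z^{\IT\gamma}(s)-Z(s)=\sum_{\alpha\in A_\gamma\setminus\{\emptyset\}}\Bigl(\sum_{m<n_s}\bigl[\id_\alpha(Z^{\IT\gamma}(t_m))-\id_\alpha(Z(t_m))\bigr]I_\alpha(t_m,t_{m+1})+\ldots\Bigr)-\sum_{\alpha\in B(A_\gamma)}R_\alpha(s).
\end{equation*}
Here the dots denote the partial step up to $s$, and $R_\alpha(s)$ is the summed remainder $\sum_m I_\alpha[\id_\alpha(Z)]_{t_m,t_{m+1}\wedge s}$. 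It is convenient to write each of these as a single multiple integral over $[0,s]$ with integrand frozen at the grid point $t_{n_{s_1}}$.

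\textbf{Main-term estimate.} For each $\alpha\in A_\gamma$ I would bound $\E\sup_{u\le s}|\cdot|^p$ of the main-term difference by $C\int_0^s\E\sup_{r\le u}|Z^{\IT\gamma}(r)-Z(r)|^p\,du$. The approach is to apply BDG to the outermost stochastic integrals, Hölder to the $\diff s$ integrals, and peel off the iterated structure layer by layer. The Lipschitz property of $\id_\alpha$ turns the increments of the integrands into the error itself.

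\textbf{Remainder estimate (main obstacle).} I expect the remainder term to be the hardest step. I need
\begin{equation*}
\E\sup_{s\le T}|R_\alpha(s)|^p\le C\Delta t^{p\gamma}.
\end{equation*}
The key lemma is the standard multiple-integral bound: for an adapted integrand $h$,
\begin{equation*}
\E\sup_{s\le T}\Bigl|\sum_m I_\alpha[h]_{t_m,t_{m+1}\wedge s}\Bigr|^p\le C\,\Delta t^{\,p(\ell(\alpha)+n(\alpha)-1)/2}\,\E\sup|h|^p
\end{equation*}
when $\ell(\alpha)\ne n(\alpha)$, and $C\Delta t^{\,p(\ell(\alpha)+n(\alpha)-2)/2}\sup|h|^p$ when $\ell(\alpha)=n(\alpha)$, the purely deterministic case, where there is no martingale cancellation.

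The gain of $1/2$ in the first case comes from the summed martingale structure: BDG is applied across the steps, so the local $\Delta t^{1/2}$ losses do not accumulate. I would prove this lemma by induction on $\ell(\alpha)$, distinguishing whether the last index is $0$ or $1$.

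Combining the lemma with the definition \eqref{eq:def_Agamma} gives the right exponent. For $\alpha\in B(A_\gamma)$, either $\ell(\alpha)+n(\alpha)\ge 2\gamma+1$, or $\alpha$ is all zeros with $\ell(\alpha)=\gamma+3/2$; both yield the exponent $\gamma$. The factor $\E\sup|\id_\alpha(Z)|^p$ is finite by the linear growth of $\id_\alpha$ for $\alpha\in B(A_\gamma)$ and the moment bounds from the setup.

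\textbf{Conclusion.} Putting the two estimates together gives
\begin{equation*}
\E\sup_{u\le s}|Z^{\IT\gamma}(u)-Z(u)|^p\le C\Delta t^{p\gamma}+C\int_0^s\E\sup_{r\le u}|Z^{\IT\gamma}(r)-Z(r)|^p\,du,
\end{equation*}
and Gronwall's lemma completes the proof.
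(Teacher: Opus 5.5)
Your proposal is correct and is essentially the argument the paper relies on. The paper gives no self-contained proof; it defers to a modification of the classical Kloeden--Platen-type strong convergence proof for It\^o--Taylor schemes, combined with an $L^p$ moment lemma for summed multiple It\^o integrals. That is exactly your structure: moment bounds, Lipschitz control of the main terms, the $\Delta t^{p\gamma}$ bound on the remainder, and Gronwall.

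One point in your remainder lemma is worth making explicit. For $\ell(\alpha)\neq n(\alpha)$ with last index $0$, the extra half order does not come from a plain induction on the summed estimate. It comes from $\E[I_\alpha[h]_{t_m,t_{m+1}}\mid\mathcal{F}_{t_m}]=0$, which makes the step sums a discrete martingale to which BDG applies.
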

The proof of Proposition~\ref{prop:ITexpConv} follows from modifying the proof of Theorem $1$ in \cite{MR1459485} using Lemma $4.1$ in \cite{MR1417823}. As we aim to use~\eqref{eq:IT_approx} to approximate the solution of the SDE~\eqref{eq:LITsdeAdd} with additive noise, we next discuss how~\eqref{eq:IT_approx} simplifies when the diffusion coefficient $\sigma \equiv 1$.

\subsection{Itô--Taylor schemes for SDEs with additive noise}\label{sec:IT_add_noise}
In this section, we describe how the general Itô--Taylor approximation in~\eqref{eq:IT_approx} simplifies when the diffusion coefficient function $\sigma \equiv 1$. We return to the specific notation introduced in Section~\ref{sec:setting} (including using $t$ for the time variable). More precisely, we consider SDEs of the form
\begin{equation}\label{eq:IT_SDE_additive}
\left\lbrace
\begin{aligned}
& \diff Y(t) = H(Y(t)) \diff t + \diff B(t),\ t \in (0,T], \\
& Y(0) = \Phi(x_{0});
\end{aligned}
\right.
\end{equation}
that is, the drift coefficient function $\mu = H$ with $H$ defined in~\eqref{eq:Lamp_drift}.

The Itô--Taylor scheme of order $\gamma \in \{0.5,1,1.5,2,\ldots \}$ (abbreviated by $\IT \gamma$) from $t = t_{m}$ to $t=t_{m+1}$ applied to~\eqref{eq:IT_SDE_additive} is given by~\eqref{eq:IT_approx} with $s = t_{m+1}$. More precisely, we initialise $Y^{\IT \gamma}_{0} = \Phi(x_{0})$, and we recursively define
\begin{equation}\label{eq:IT_approx_add}
Y^{\IT \gamma}_{m+1} = Y^{\IT \gamma}_{m} +  \sum_{\alpha \in A_{\gamma} \setminus \{  \emptyset \}} \operatorname{id}_{\alpha}(Y^{\IT \gamma}_{m}) I_{\alpha}(t_{m},t_{m+1}),\ m=0,\ldots,M-1.
\end{equation}
We remark that the operators $\Lo$ and $\Li$ in~\eqref{eq:LoDef} and in~\eqref{eq:LiDef}, respectively, are now with respect to $\mu = H$ and $\sigma \equiv 1$. In the specific setting of the SDE in~\eqref{eq:IT_SDE_additive}, we prove that the regularity parameter $k \in \{2,3,4,\ldots\}$ governing the regularity of the coefficient functions in~\eqref{eq:SDE} determines precisely which $\IT \gamma$ schemes are convergent. The key ingredient in the proof of strong convergence of the $\IT \gamma$ schemes is the following lemma combined with Proposition~\ref{prop:ITexpConv}.

\begin{lemma}\label{lem:highDerLemma}
Let $\alpha \in \{ 0,1 \}^{L}$ be a multi-index. Then the highest derivative of $H$ in $\id_{\alpha}$ is bounded from above by $\ell(\alpha) + n(\alpha) - 2$. Moreover, the following holds:
\begin{enumerate}
\item If $\alpha \in A_{\gamma}$, then
\begin{equation*}
\ell(\alpha) + n(\alpha) - 2 \leq \begin{cases} 2 \gamma - 2,\ \text{for } \gamma = m \in \{1,2,\ldots \},\\ 2 \gamma - 1,\ \text{for } \gamma = m + 1/2,\ m \in \{0,1,2,\ldots \}. \end{cases}
\end{equation*}
\item If $\alpha \in B(A_{\gamma})$, then 
\begin{equation*}
\ell(\alpha) + n(\alpha) - 2 \leq \begin{cases} 2 \gamma,\ \text{for } \gamma = m \in \{1,2,\ldots \},\\ 2 \gamma + 1,\ \text{for } \gamma = m + 1/2,\ m \in \{0,1,2,\ldots \}. \end{cases}
\end{equation*}
\end{enumerate}
\end{lemma}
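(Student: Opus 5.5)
We argue by induction on the length $\ell(\alpha)$, tracking how each operator raises the derivative order of $H$. With $\mu = H$ and $\sigma \equiv 1$, the operators act as
\begin{equation*}
\Lo[F] = H F' + \tfrac{1}{2} F'', \qquad \Li[F] = F'.
\end{equation*}
Write $d(F)$ for the highest derivative of $H$ appearing in an expression $F$, which is a polynomial in $H, H', \ldots$; set $d(F) = -1$ if $H$ does not appear and $F$ is constant, and $d(F)=-2$ if $F=\id$ (no $H$, but nonconstant).

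The key observation is how each operator changes $d$. Differentiation raises $d$ by at most one, so $d(\Li[F]) \le d(F)+1$. For $\Lo[F]$, the term $\tfrac12 F''$ raises $d$ by at most two, while $HF'$ has $d \le \max(0, d(F)+1)$. Hence $d(\Lo[F]) \le \max(0, d(F)+2)$.

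We also need care with the low-order cases. For $\id$ we have $\id' = 1$ and $\id''=0$, so $\id_{(0)} = H$ with $d=0$, and $\id_{(1)} = 1$ with $d=-1$. A constant is killed by both operators, giving $0$. Since $\id_\alpha = \LL^{\alpha_1}[\id_{-\alpha}]$, the induction applies operators from the left. The claim is $d(\id_\alpha) \le \ell(\alpha)+n(\alpha)-2$, with zero functions treated as satisfying any bound.

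\textbf{Base cases.} For $\alpha=\emptyset$ we have $\ell+n-2=-2$, which is consistent with the convention $d(\id)=-2$. For lengths one and two the bound can be checked directly.

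\textbf{Inductive step.} Applying $\Li$ adds $1$ to $\ell$ and $0$ to $n$, matching the $+1$ bound. Applying $\Lo$ adds $2$ to $\ell+n$, matching the $+2$ bound. The only obstacle is the $\max(0,\cdot)$ in the bound for $\Lo$, which could break the inequality only when $d(F)+2<0$. That happens only for $F=\id$, and there $\Lo[\id]=H$ with $d=0=1+1-2$, so the bound still holds.

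\textbf{Items (1) and (2).} It remains to bound $\ell(\alpha)+n(\alpha)$ over $A_\gamma$ and $B(A_\gamma)$.

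For (1), take $\alpha\in A_\gamma$. Either $\ell+n\le 2\gamma$, or $\ell=n=\gamma+\tfrac12$. In the second case $\gamma$ is a half-integer and $\ell+n=2\gamma+1$. So $\ell+n-2\le 2\gamma-2$ when $\gamma$ is an integer, and $\ell+n-2\le 2\gamma-1$ when $\gamma$ is a half-integer.

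For (2), take $\alpha\in B(A_\gamma)$, so $-\alpha\in A_\gamma$. Removing the first index lowers $\ell+n$ by $1$ or $2$. Hence $\ell(\alpha)+n(\alpha)\le \ell(-\alpha)+n(-\alpha)+2$. Using the bound from (1) for $-\alpha$ then gives $\le 2\gamma+2$ for integer $\gamma$ and $\le 2\gamma+3$ for half-integer $\gamma$. Subtracting $2$ yields exactly the stated bounds.

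The main subtlety is the bookkeeping conventions for $\id$ and for constants in the inductive step. The set bounds in (1) and (2) are routine.
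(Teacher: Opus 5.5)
Your proof is correct and is essentially the paper's argument. Like you, the paper bounds the derivative order by noting that each $0$ raises it by at most $2$ and each $1$ by at most $1$, and it proves (1) and (2) exactly as you do via $-\alpha\in A_\gamma$ and $\ell(\alpha)+n(\alpha)\le\ell(-\alpha)+n(-\alpha)+2$. The only difference is bookkeeping: the paper splits on the last index ($\id_\alpha\equiv 0$ if $\alpha$ ends in $1$ with $\ell(\alpha)\ge 2$, and $\id_\alpha=\LL^{\alpha_1}\cdots\LL^{\alpha_{L-1}}[H]$ otherwise), whereas your conventions $d(\id)=-2$, $d(\mathrm{const})=-1$ fold this into one induction. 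A harmless slip: under those conventions $d(F)+2\ge 0$ always (it equals $0$ for $F=\id$), so the $\max(0,\cdot)$ never actually bites.
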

\begin{proof}
If $\alpha = (\alpha_{1},\ldots,\alpha_{L-1},1)$ ends with a $1$ and is of length at least $2$, then
\begin{equation*}
\id_{\alpha} = \LL^{\alpha_{1}} \dots \LL^{\alpha_{L-1}} \Li [\id] \equiv 0
\end{equation*}
Indeed, $\Li [\id] = 1$, and therefore $\LL^{\alpha_{L-1}} \Li [\id] \equiv 0$. Thus, if the last index $\alpha_{L} = 1$ then $\id_{\alpha} \equiv 0$, and the statement follows immediately. We assume in the rest of the proof that $\alpha = (\alpha_{1},\ldots,\alpha_{L-1},0)$.

From the general formulas for $\Lo$ and $\Li$, any $0$ in a multi-index $\alpha \in \{ 0,1 \}^{L}$ raises the derivative order by at most $2$ and any $1$ in a multi-index $\alpha$ raises the derivative order by at most $1$. Thus, the highest possible derivative order of $\id_{\alpha}$ for $\alpha = (\alpha_{1},\ldots,\alpha_{L-1},0)$ is
\begin{align*}
2 \left| \{ \text{zeros in } (\alpha_{1},\ldots,\alpha_{L-1}) \} \right| + \left| \{ \text{ones in } (\alpha_{1},\ldots,\alpha_{L-1}) \} \right| &= 2 n (\alpha -) + \ell(\alpha-) - n(\alpha-) \\ &= \ell(\alpha) + n(\alpha) - 2,
\end{align*}
where we used that
\begin{equation*}
\ell(\alpha-)= \ell(\alpha)-1,\ n(\alpha-) = n(\alpha)-1,
\end{equation*}
and that
\begin{equation*}
\id_{\alpha} = \LL^{j_{1}} \ldots \LL^{j_{L-1}} [H].
\end{equation*}

Suppose now that $\alpha \in A_{\gamma}$. If $\gamma = m \in \{1,2,3,\ldots \}$ is an integer, then by the definition of $A_{\gamma}$, we have that
\begin{equation*}
\ell(\alpha) + n(\alpha) \leq 2 \gamma,
\end{equation*}
which implies that
\begin{equation*}
\ell(\alpha) + n(\alpha) - 2 \leq 2 \gamma -2.
\end{equation*}
If $\gamma = m + 1/2$, with $m \in \{0,1,2,\ldots \}$, then by the definition of $A_{\gamma}$, we have two cases: $1)$ $\ell(\alpha) + n(\alpha) \leq 2 \gamma$ or $2)$ $\ell(\alpha) = n(\alpha) = \gamma + 1/2$. Case $1)$ implies, as above, that
\begin{equation*}
\ell(\alpha) + n(\alpha) - 2 \leq 2 \gamma -2
\end{equation*}
and case $2)$ implies that
\begin{equation*}
\ell(\alpha) + n(\alpha) -2 = 2 \gamma -1.
\end{equation*}
Thus, in summary, if $\gamma = m + 1/2$, with $m \in \{ 0,1,2,\ldots \}$, then $\ell(\alpha) + n(\alpha) - 2 \leq 2 \gamma - 1$.

Suppose now that $\alpha \in B(A_{\gamma})$. By definition of $B(A_{\gamma})$, if $\alpha \in B(A_{\gamma})$ then $-\alpha \in A_{\gamma}$. Since $-\alpha \in A_{\gamma}$, we can apply the above to $-\alpha$
\begin{equation*}
\ell(-\alpha) + n(-\alpha) - 2 \leq \begin{cases} 2 \gamma - 2,\ \text{for } \gamma = m \in \{1,2,\ldots \},\\ 2 \gamma - 1,\ \text{for } \gamma = m + 1/2,\ m \in \{0,1,2,\ldots \}. \end{cases}
\end{equation*}
By definition,
\begin{equation*}
\ell(\alpha) = \ell(-\alpha) - 1,
\end{equation*}
and we can estimate
\begin{equation*}
n(\alpha) \leq n(-\alpha)+1.
\end{equation*}
The latter uses that
\begin{equation*}
n(\alpha) = n(-\alpha)
\end{equation*}
for $\alpha_{1}=1$ and that
\begin{equation*}
n(\alpha) = n(-\alpha) +1
\end{equation*}
for $\alpha_{1} = 0$. This gives us the desired estimate
\begin{equation*}
\ell(\alpha) + n(\alpha) - 2 \leq \ell(-\alpha) + n(-\alpha) - 2 + 2 \leq \begin{cases} 2 \gamma,\ \text{for } \gamma = m \in \{1,2,\ldots \},\\ 2 \gamma + 1,\ \text{for } \gamma = m + 1/2,\ m \in \{0,1,2,\ldots \}. \end{cases}
\end{equation*}
\end{proof}

We next state and prove strong convergence of the $\IT \gamma$ scheme. This implies that the $\IT \gamma$ schemes satisfy Assumption~\ref{ass:globalStrongInt} under the regularity assumptions in Section~\ref{sec:setting}.
\begin{proposition}\label{prop:IT_conv_add}
Suppose that Assumptions~\ref{ass:f},~\ref{ass:g}, and~\ref{ass:fg} are satisfied with $k = 2 \gamma + 1$ for $2 \gamma \in \{1,3,5,\ldots \}$. Then the $\IT \gamma$ scheme satisfies
\begin{equation*}
\PP (Y^{\IT \gamma}_{m} \in \mathbb{R},\ \forall m = 0,\ldots,M) = 1,
\end{equation*}
and, for every $p \in (0,\infty)$, is $p$-strongly convergent with order $\gamma$:
\begin{equation*}
\left( \E \left[ \sup_{m=0, \ldots, M} | Y_{m}^{\IT \gamma} - Y(t_{m}) |^{p} \right] \right)^{\frac{1}{p}} \leq C \Delta t^{\gamma},
\end{equation*}
where the constant $C>0$ is independent of $\Delta t >0$. Similarly, the $\IT (\gamma+1/2)$ scheme satisfies
\begin{equation*}
\PP (Y^{\IT (\gamma+1/2)}_{m} \in \mathbb{R},\ \forall m = 0,\ldots,M) = 1,
\end{equation*}
and, for every $p \in (0,\infty)$, is $p$-strongly convergent with order $\gamma+1/2$:
\begin{equation*}
\left( \E \left[ \sup_{m=0, \ldots, M} | Y_{m}^{\IT (\gamma+1/2)} - Y(t_{m}) |^{p} \right] \right)^{\frac{1}{p}} \leq C \Delta t^{\gamma+1/2},
\end{equation*}
where the constant $C>0$ is independent of $\Delta t >0$.
\end{proposition}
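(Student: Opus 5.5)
The plan is to verify the two hypotheses of Proposition~\ref{prop:ITexpConv} for the additive-noise SDE~\eqref{eq:IT_SDE_additive} with $F=\id$, $\mu=H$, $\sigma\equiv1$, for both $\gamma$ and $\gamma+1/2$, and then read off the two claims. The first claim (finiteness) is handled separately by induction over $m$. $Y^{\IT\gamma}_{m+1}$ is a finite sum, since $|A_\gamma|<\infty$, of products $\id_\alpha(Y^{\IT\gamma}_m)\,I_\alpha(t_m,t_{m+1})$. Each $\id_\alpha$ with $\alpha\neq\emptyset$ is either identically $0$ or $1$, or a polynomial expression in $H,H',\dots$ of order at most $k$; this is shown below. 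Such an expression is a real-valued (indeed bounded) function. Lemma~\ref{lem:Ialpha_bounded} gives $I_\alpha(t_m,t_{m+1})\in\mathbb{R}$ almost surely. A finite intersection of almost sure events then yields $\PP(Y^{\IT\gamma}_m\in\mathbb{R},\ \forall m)=1$, and likewise for $\IT(\gamma+1/2)$.

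For the error estimate, I apply Lemma~\ref{lem:highDerLemma} with $k=2\gamma+1$ and $\gamma=m+1/2$.
\begin{itemize}
\item For $\alpha\in A_\gamma$, the highest derivative of $H$ appearing in $\id_\alpha$ has order at most $2\gamma-1=k-2$.
\item For $\alpha\in B(A_\gamma)$, it has order at most $2\gamma+1=k$.
\item For the integer order $\gamma'=\gamma+1/2=m+1$: $\alpha\in A_{\gamma'}$ involves derivatives of order at most $2\gamma'-2=k-2$, and $\alpha\in B(A_{\gamma'})$ involves order at most $2\gamma'=k+1$.
\end{itemize}
The last bound exceeds $k$, so here I would refine the count. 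For $\alpha\in B(A_{\gamma'})$ with $\alpha_L=0$, the bound $\ell(\alpha)+n(\alpha)-2=2\gamma'$ can only be reached if $\alpha_1=0$ and $-\alpha$ attains $\ell+n=2\gamma'$. Even in that case, $\LL^0=H\,\partial+\tfrac12\partial^2$ applied to a function whose top derivative is of order $k-1$ produces order $k+1$. So I would aim to show that the extreme indices force $\id_\alpha\equiv0$: note that in the additive case $\Li\Lo[\id]=H'$ etc. The main obstacle is checking whether the top-order case genuinely occurs.

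If it does occur, the fallback is the weaker hypothesis of Proposition~\ref{prop:ITexpConv}. There, remainder functions only need linear growth, not differentiability. A function of order $k+1$ is not defined under our assumptions, however, so I would instead re-expand only those remainder terms, or note that Proposition~\ref{prop:ITexpConv} is really used with remainders of the form $I_\alpha[F_\alpha(Z)]$. In that form one can integrate by parts one derivative against the iterated integral. I expect this bookkeeping to be the delicate step, and I would first try to show directly that the offending indices satisfy $\alpha_L=1$ and hence vanish.

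With the derivative orders settled, Proposition~\ref{prop:H} gives $H\in\C^k_b(\mathbb{R})$. Every $\id_\alpha$ is then a polynomial in $H,\dots,H^{(j)}$ with constant coefficients (because $\sigma\equiv1$). For $\alpha\in A_\gamma$ we have $j\le k-2$, so $\id_\alpha$ is differentiable with bounded derivative, being a polynomial in bounded functions with bounded derivatives. Hence $\id_\alpha$ is globally Lipschitz. The trivial cases $\id_{(1)}=1$ and $\id_{(0)}=H$ are covered as well. For $\alpha\in B(A_\gamma)$ we have $j\le k$, so $\id_\alpha$ is bounded and in particular has linear growth. Proposition~\ref{prop:ITexpConv} then gives the sup-in-time bound on $[0,T]$. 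Restricting to the grid points $t_m$, where $Z^{\IT\gamma}(t_m)=Y^{\IT\gamma}_m$ by~\eqref{eq:IT_approx_add}, yields the stated estimates for orders $\gamma$ and $\gamma+1/2$.
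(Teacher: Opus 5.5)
Your overall route is the paper's. You establish finiteness by induction over $m$ with Lemma~\ref{lem:Ialpha_bounded}. You then use Lemma~\ref{lem:highDerLemma} with Proposition~\ref{prop:H} to show that $\id_\alpha$ is bounded and Lipschitz on $A_\gamma$ and $A_{\gamma+1/2}$, and of linear growth on the remainder sets. Finally you apply Proposition~\ref{prop:ITexpConv}.

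As written, however, your argument does not close. You leave the case $\alpha\in B(A_{\gamma+1/2})$ open as ``the main obstacle'', and that obstacle comes from an arithmetic slip. With $k=2\gamma+1$ and $\gamma'=\gamma+1/2$ one has $2\gamma'=2\gamma+1=k$, not $k+1$. In the same bullet you correctly computed $2\gamma'-2=k-2$, which already forces $2\gamma'=k$. Hence Lemma~\ref{lem:highDerLemma} bounds the derivative order of $H$ in $\id_\alpha$, $\alpha\in B(A_{\gamma+1/2})$, by exactly $k$, the same bound as for $B(A_\gamma)$. Since $H\in\C^k_b(\mathbb{R})$, these $\id_\alpha$ are bounded, hence of linear growth. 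This is exactly how the paper concludes. The same slip appears in your refinement: $-\alpha\in A_{\gamma'}$ carries derivatives of order at most $k-2$, not $k-1$, and one more $\Lo$ raises this to $k$.

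You should discard the repairs you sketch for this phantom case. Your first plan, to show that the top-order indices end in $1$ and hence vanish, is false, because order $k$ is genuinely attained. For $\gamma=1/2$ (so $k=2$), the index $(0,0)\in B(A_1)$ gives $\id_{(0,0)}=HH'+\tfrac12H''$. In general, the all-zero index of length $\gamma'+1$ lies in $B(A_{\gamma'})$ and gives $(\Lo)^{\gamma'}[H]$, whose top-order term is $2^{-\gamma'}H^{(k)}$. The integration-by-parts fallback is only a sketch, not an argument. Neither repair is needed, because derivatives of order $k$ are bounded by Proposition~\ref{prop:H}. Once you correct the arithmetic and delete that detour, your proof coincides with the paper's.
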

\begin{proof}
The statements follow if we prove that $\id_{\alpha}$ is Lipschitz continuous and bounded for every $\alpha \in A_{\gamma}$ and for every $\alpha \in A_{\gamma+1/2}$, respectively, and that $\id_{\alpha}$ is of linear growth for every $\alpha \in B \left( A_{\gamma} \right)$ and for every $\alpha \in B\left( A_{\gamma+1/2} \right)$, respectively. Firstly, if $\id_{\alpha}$ is bounded for every $\alpha \in A_{\gamma}$ and if $Y^{\IT \gamma}_{m} \in \mathbb{R}$ then
\begin{equation*}
Y^{\IT \gamma}_{m+1} = Y^{\IT \gamma}_{m} +  \sum_{\alpha \in A_{\gamma} \setminus \{  \emptyset \}} \operatorname{id}_{\alpha}(Y^{\IT \gamma}_{m}) I_{\alpha}(t_{m},t_{m+1}) \in \mathbb{R},
\end{equation*}
where we also used Lemma~\ref{lem:Ialpha_bounded} over finitely many $\alpha \in A_{\gamma} \setminus \{  \emptyset \}$. By induction over $m=0,\ldots,M$, we obtain
\begin{equation*}
\PP (Y^{\IT \gamma}_{m} \in \mathbb{R},\ \forall m = 0,\ldots,M) = 1.
\end{equation*}
Secondly, if $\id_{\alpha}$ is Lipschitz continuous for every $\alpha \in A_{\gamma}$ and if $\id_{\alpha}$ is of linear growth for every $\alpha \in B \left( A_{\gamma} \right)$ then the convergence statement for the $\IT \gamma$ scheme follows from Proposition~\ref{prop:ITexpConv}. The same arguments give us the corresponding statements for the $\IT (\gamma+1/2)$ scheme. 

Let us first consider the $\IT \gamma$ scheme. By Lemma~\ref{lem:highDerLemma}, the highest derivative of $H$ in $\id_{\alpha}$ for $\alpha \in A_{\gamma}$ is
\begin{equation*}
\ell(\alpha) + n(\alpha)-2 \leq 2 \gamma - 1,
\end{equation*}
since $\gamma \in \{0.5,1.5,2.5,\ldots \}$. Since $H \in \C^{k}_{b}(\mathbb{R})$ and $k=2\gamma+1$, we conclude that $\id_{\alpha}$ is Lipschitz continuous and bounded for all $\alpha \in A_{\gamma}$. Similarly, by Lemma~\ref{lem:highDerLemma}, the highest derivative of $H$ in $\id_{\alpha}$ for $\alpha \in B(A_{\gamma})$ is
\begin{equation*}
\ell(\alpha) + n(\alpha) - 2 \leq 2 \gamma + 1,
\end{equation*}
since $\gamma \in \{ 0.5,1.5,2.5,\ldots \}$. We conclude that $\id_{\alpha}$ is of linear growth for all $\alpha \in B(A_{\gamma})$, since $H \in \C^{k}_{b}(\mathbb{R})$ and $k=2\gamma+1$. This implies, by Proposition~\ref{prop:ITexpConv}, that the $\IT \gamma$ scheme is p-strongly convergent with order $\gamma$.

Let us now consider the $\IT (\gamma+1/2)$ scheme. The argument is the same as the argument for the $\IT \gamma$ scheme. If $\alpha \in A_{\gamma + 1/2}$, then we use Lemma~\ref{lem:highDerLemma} to bound
\begin{equation*}
\ell(\alpha) + n(\alpha) - 2 \leq 2 (\gamma + 1/2) - 2 = 2 \gamma - 1.
\end{equation*}
Since $H \in \C^{2 \gamma + 1}_{b}(\mathbb{R})$, we conclude that $\id_{\alpha}$ is Lipschitz continuous and bounded for all $\alpha \in A_{\gamma + 1/2}$. If $\alpha \in B(A_{\gamma + 1/2})$, then we can similarly bound
\begin{equation*}
\ell(\alpha) + n(\alpha) - 2 \leq 2 (\gamma + 1/2) = 2 \gamma + 1,
\end{equation*}
again using Lemma~\ref{lem:highDerLemma}. We conclude that $\id_{\alpha}$ is of linear growth for all $\alpha \in B(A_{\gamma})$, since $H \in \C^{k}_{b}(\mathbb{R})$ and $k=2 \gamma + 1$. This completes the proof.
\end{proof}
We remark that Proposition~\ref{prop:IT_conv_add} is also true for the time-continuous analogue of the time-discrete scheme $Y^{\IT \gamma}_{m}$ in~\eqref{eq:IT_approx_add}. We use the discrete time definition as this is what is implementable.

\subsection{Lamperti--Itô--Taylor schemes}\label{sec:LIT_schemes}
We next provide the definition of the Lamperti--Itô--Taylor-$\gamma$ ($\LIT \gamma$) scheme of strong order $\gamma$, for $\gamma \in \{0.5,1,1.5,2,\ldots \}$, for the considered SDE
\begin{equation*}
\left\lbrace
\begin{aligned}
& \diff X(t) = f(X(t)) \diff t + g(X(t)) \diff B(t),\ t \in (0,T], \\ 
& X(0) = x_{0} \in \D.
\end{aligned}
\right.
\end{equation*}
We define the $\LIT \gamma$ scheme as
\begin{equation}\label{eq:LIT_def}
X^{\LIT \gamma}_{m} = \Phi^{-1}(Y^{\IT \gamma}_{m}),\ m=0,\ldots,M,
\end{equation}
where $Y^{\IT \gamma}_{m}$ is the one-step $\IT \gamma$ scheme defined in~\eqref{eq:IT_approx_add}.

The convergence of the $\LIT \gamma$ scheme defined~\eqref{eq:LIT_def} follows from Proposition~\ref{prop:IT_conv_add} and Theorem~\ref{theo:mainStrong}.

\begin{corollary}\label{cor:LIT_conv}
Let $M\in\N$, $T > 0$, $\Delta t = T/M$ and let $x_{0} \in \D$. Suppose Assumptions~\ref{ass:f},~\ref{ass:g},~\ref{ass:fg}, and ~\ref{ass:globalStrongInt} are satisfied with $k = 2 \gamma + 1$ for $2 \gamma \in \{1,3,5,\ldots \}$. Then the $\LIT \gamma$ scheme satisfies
\begin{equation*}
\PP (X^{\LIT \gamma}_{m} \in \D,\ \forall m = 0,\ldots,M) = 1,
\end{equation*}
and, for every $p \in (0,\infty)$, is $p$-strongly convergent with order $\gamma$:
\begin{equation*}
\left( \E \left[ \sup_{m=0, \ldots, M} | X_{m}^{\LIT \gamma} - X(t_{m}) |^{p} \right] \right)^{\frac{1}{p}} \leq C \Delta t^{\gamma},
\end{equation*}
where the constant $C>0$ is independent of $\Delta t >0$. Similarly, the $\LIT (\gamma+1/2)$ scheme satisfies
\begin{equation*}
\PP (X^{\LIT (\gamma+1/2)}_{m} \in \D,\ \forall m = 0,\ldots,M) = 1,
\end{equation*}
and, for every $p \in (0,\infty)$, is $p$-strongly convergent with order $\gamma+1/2$:
\begin{equation*}
\left( \E \left[ \sup_{m=0, \ldots, M} | X_{m}^{\LIT (\gamma+1/2)} - X(t_{m}) |^{p} \right] \right)^{\frac{1}{p}} \leq C \Delta t^{\gamma+1/2},
\end{equation*}
where the constant $C>0$ is independent of $\Delta t >0$.
\end{corollary}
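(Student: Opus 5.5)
The corollary follows by chaining Proposition~\ref{prop:IT_conv_add} with Theorem~\ref{theo:mainStrong}. Throughout, fix $2\gamma \in \{1,3,5,\ldots\}$ and take $k = 2\gamma+1$ in Assumptions~\ref{ass:f},~\ref{ass:g}, and~\ref{ass:fg}. By Proposition~\ref{prop:H}, this gives $H \in \C^{2\gamma+1}_{b}(\mathbb{R})$, which is the regularity hypothesis used in Proposition~\ref{prop:IT_conv_add}.

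\textbf{Step 1 (the $\IT\gamma$ sequence satisfies Assumption~\ref{ass:globalStrongInt}).} Apply Proposition~\ref{prop:IT_conv_add} to the sequence $Y_{m} := Y^{\IT\gamma}_{m}$. It yields $\PP(Y^{\IT\gamma}_{m} \in \mathbb{R},\ \forall m) = 1$. It also yields, for every $p \in (0,\infty)$, the $p$-strong bound $\big(\E[\sup_{m} |Y^{\IT\gamma}_{m} - Y(t_{m})|^{p}]\big)^{1/p} \leq C\Delta t^{\gamma}$. These are exactly the two requirements of Assumption~\ref{ass:globalStrongInt}, with $\nu = \gamma$.

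\textbf{Step 2 (transfer to $X$).} Theorem~\ref{theo:mainStrong} applies to $X_{m} = \Phi^{-1}(Y^{\IT\gamma}_{m})$, which by~\eqref{eq:LIT_def} is precisely $X^{\LIT\gamma}_{m}$. Domain preservation comes from Proposition~\ref{prop:phiinv}: $\Phi^{-1}$ maps $\mathbb{R}$ bijectively onto $\D$, and Step 1 gives $Y^{\IT\gamma}_{m} \in \mathbb{R}$ almost surely for all $m$, so $X^{\LIT\gamma}_{m} \in \D$ almost surely. The error estimate comes from the global Lipschitz bound $|X^{\LIT\gamma}_{m} - X(t_{m})| \leq \Lip_{\Phi^{-1}} |Y^{\IT\gamma}_{m} - Y(t_{m})|$, which holds because $X(t_{m}) = \Phi^{-1}(Y(t_{m}))$. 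Taking the supremum over $m$ and then $p$-th moments gives the order $\gamma$ bound, with $C$ replaced by $\Lip_{\Phi^{-1}} C$.

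\textbf{Step 3 (the $\LIT(\gamma+1/2)$ scheme).} Repeat Steps 1 and 2 verbatim, using the second half of Proposition~\ref{prop:IT_conv_add} for $Y^{\IT(\gamma+1/2)}_{m}$, with $\nu = \gamma + 1/2$.

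The argument involves no real obstacle; the only point needing care is bookkeeping. Assumption~\ref{ass:globalStrongInt} is listed among the corollary's hypotheses, but it should be read as \emph{verified} by Proposition~\ref{prop:IT_conv_add} for the specific $\IT\gamma$ and $\IT(\gamma+1/2)$ sequences, not assumed separately. One must also check that the same $k = 2\gamma+1$ serves both the $\gamma$ and the $\gamma+1/2$ schemes. This holds because Proposition~\ref{prop:IT_conv_add} is stated under that single regularity level for both orders: by Lemma~\ref{lem:highDerLemma}, at most $2\gamma+1$ derivatives of $H$ appear in the remainder coefficients in either case.
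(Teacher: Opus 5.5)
Your proposal is correct and follows the paper's proof. The paper also obtains the corollary by taking the $\IT\gamma$ and $\IT(\gamma+1/2)$ sequences from Proposition~\ref{prop:IT_conv_add} as witnesses of Assumption~\ref{ass:globalStrongInt}, with $\nu=\gamma$ and $\nu=\gamma+1/2$ respectively, and then applying Theorem~\ref{theo:mainStrong}; your observation that Assumption~\ref{ass:globalStrongInt} is thereby verified rather than independently assumed is a fair reading of the redundant hypothesis list.
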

\begin{proof}
Boundary preservation and the convergence statement follow immediately from Theorem~\ref{theo:mainStrong} and Proposition~\ref{prop:IT_conv_add}.
\end{proof}
Corollary~\ref{cor:LIT_conv} tell us that, for a given regularity parameter $k = 2 \gamma + 1$, with $2 \gamma \in \{1,3,5,\ldots \}$, both $\LIT \gamma$ and $\LIT (\gamma+1/2)$ schemes converge with their corresponding order. Note that Corollary~\ref{cor:LIT_conv} is also true for the time-continuous analogue of $X^{\LIT \gamma}$. We use the time-discrete version since this is what is implementable. Next, we provide the explicit recursive formulas for the $\LIT \gamma$ schemes used for the numerical experiments in Section~\ref{sec:numExp}.

\subsubsection{The Lamperti--EM scheme}
The Itô--Taylor-$0.5$ scheme is also known as the Euler--Maruyama (EM) scheme, and we use the term EM as this is more common in the literature. We recall that
\begin{equation*}
A_{0.5} = \{ \emptyset, (0), (1) \}
\end{equation*}
and we insert
\begin{equation*}
I_{(0)}(t_{m},t_{m+1}) = t_{m+1} - t_{m}
\end{equation*}
and
\begin{equation*}
I_{(1)}(t_{m},t_{m+1}) = B(t_{m+1}) - B(t_{m}) = \Delta B_{m},
\end{equation*}
into~\eqref{eq:IT_approx_add} to obtain the EM scheme given by
\begin{equation*}
Y^{\EM}_{m+1} = Y^{\EM}_{m} + H(Y^{\EM}_{m}) \Delta t + \Delta B_{m},\ m=0,\ldots,M-1,
\end{equation*}
with $\Delta t = T/M$, initialised with $Y^{\EM}_{0} = \Phi(x_{0})$. We define the Lamperti--EM (LEM) scheme by
\begin{equation*}
X^{\LEM}_{m} = \Phi^{-1}(Y^{\EM}_{m}),\ m=0,\ldots,M.
\end{equation*}
By Corollary~\ref{cor:LIT_conv}, if $k \in \{2,4,6,\ldots \}$ then, for every $p \in (0,\infty)$, the $\LEM$ scheme is p-strongly convergent with order $0.5$. In fact, as is shown in Section~\ref{sec:Lamperti_Milstein}, the $\LEM$ scheme coincides in this case with the $\LIT 1.0$ scheme. Therefore, by Corollary~\ref{cor:LIT_conv}, if $k \in \{2,4,6,\ldots \}$ then, for every $p \in (0,\infty)$, the $\LEM$ scheme is p-strongly convergent with order $1.0$.

\subsubsection{The Lamperti--Itô--Taylor-$1$ scheme}\label{sec:Lamperti_Milstein}
The Itô--Taylor-$1$ scheme is also known as the Milstein scheme, and we refer to the scheme as the Milstein scheme, as this is more common in the literature. We recall that 
\begin{equation*}
A_{1} = \{ \emptyset,\ (0),\ (1),\ (1,1) \}
\end{equation*}
and we insert
\begin{equation*}
I_{(0)}(t_{m},t_{m+1}) = t_{m+1} - t_{m}
\end{equation*}
and
\begin{equation*}
I_{(1)}(t_{m},t_{m+1}) = B(t_{m+1}) - B(t_{m}) = \Delta B_{m},
\end{equation*}
into~\eqref{eq:IT_approx_add} to obtain the Milstein scheme
\begin{equation*}
Y^{\MIL}_{m+1} = Y^{\MIL}_{m} + H(Y^{\MIL}_{m}) \Delta t + \Delta B_{m} + \id_{(1,1)}(Y^{\MIL}_{m}) I_{(1,1)}(t_{m},t_{m+1}),\ m=0,\ldots,M-1.
\end{equation*}
In other words, the additional term compared to the EM scheme is the one corresponding to the multi-index $(1,1)$. Using~\eqref{eq:DefMultIndCoefFunc}, we obtain that it vanishes
\begin{equation*}
\id_{(1,1)}(Y^{\MIL}_{m}) = 0.
\end{equation*}
This is the well-known result that, in the case with additive noise and provided that the Milstein scheme is well-defined, the EM and Milstein schemes coincide. Therefore, by Corollary~\ref{cor:LIT_conv}, the EM scheme converges $p$-strongly with order $1$, for every $p \in (0,\infty)$, in this case, provided that $k \in \{2,4,6,\ldots \}$.

\subsubsection{The Lamperti--Itô--Taylor-$1.5$ scheme}
By using the definition of $A_{\gamma}$ in~\eqref{eq:def_Agamma} for $\gamma=1.5$, we see that
\begin{equation*}
A_{1.5} = \{ \emptyset,\ (0),\ (1),\ (1,1),\ (0,1),\ (1,0),\ (0,0),\ (1,1,1) \}.
\end{equation*}
In other words, the additional terms compared to the Milstein scheme are the four terms corresponding to the multi-indices $(0,1),\ (1,0),\ (0,0),\ (1,1,1)$. Using~\eqref{eq:DefMultIndCoefFunc}, we compute
\begin{equation*}
\id_{(0,1)}(Y(t_{m})) = 0,
\end{equation*}
\begin{equation*}
\id_{(1,0)}(Y(t_{m})) = H'(Y(t_{m})),
\end{equation*}
\begin{equation*}
\id_{(0,0)}(Y(t_{m})) = H(Y(t_{m})) H'(Y(t_{m})) + \frac{1}{2} H''(Y(t_{m})),
\end{equation*}
and
\begin{equation*}
\id_{(1,1,1)}(Y(t_{0})) = 0.
\end{equation*}
Combining this with~\eqref{eq:DefMultIndInt}, the additional terms compared to the Milstein scheme are
\begin{equation*}
\id_{(1,0)}(Y(t_{m})) I_{(1,0)}(t_{m},t_{m+1}) =  H'(Y(t_{m})) I_{(1,0)}(t_{m},t_{m+1}),
\end{equation*}
and
\begin{equation*}
\id_{(0,0)}(Y(t_{m})) I_{(0,0)}(t_{m},t_{m+1}) = \left( H(Y(t_{m})) H'(Y(t_{m})) + \frac{1}{2} H''(Y(t_{m})) \right) \frac{(t_{m+1}-t_{m})^{2}}{2}.
\end{equation*}
For numerical experiments involving Itô--Taylor schemes of order strictly higher than $1/2$, we need more information about the Brownian motion path than what is provided by the typical Brownian motion increments $\Delta B_{m}$ for $m=0,\ldots,M-1$. In our case, this means that we sample $I_{(1,0)}(t_{m},t_{m+1})$, coupled with the Brownian motion path, exactly (see Section~\ref{sec:comp_iter_ints}). Therefore, we let this term remain in the following formulas.

Inserting the above into~\eqref{eq:IT_approx_add} with $\gamma = 1.5$ gives us the Itô--Taylor-$1.5$ ($\IT 1.5$) scheme given by
\begin{equation*}\label{eq:IT_15_add}
  \begin{split}
    Y^{\IT 1.5}_{m+1} &= Y^{\IT 1.5}_{m} + H(Y^{\IT 1.5}_{m}) \Delta t + \Delta B_{m} \\ &+ H'(Y^{\IT 1.5}_{m}) I_{(1,0)}(t_{m},t_{m+1}) \\ &+ \left( H(Y^{\IT 1.5}_{m}) H'(Y^{\IT 1.5}_{m}) + \frac{1}{2} H''(Y^{\IT 1.5}_{m}) \right) \frac{\Delta t^{2}}{2},\ m=0,\ldots,M-1,
  \end{split}
\end{equation*}
with $\Delta t = T/M$, initialised with $Y^{\IT 1.5}_{0} = \Phi(x_{0})$. We define the Lamperti--Itô--Taylor-$1.5$ ($\LIT 1.5$) scheme by
\begin{equation}\label{eq:LIT_15_add}
X^{\LIT 1.5}_{m} = \Phi^{-1}(Y^{\IT 1.5}_{m}),\ m=0,\ldots,M.
\end{equation}
By Corollary~\ref{cor:LIT_conv}, if $k \in \{4,6,8, \ldots \}$ then, for every $p \in (0,\infty)$, the $\LIT 1.5$ scheme is p-strongly convergent with order $1.5$.

\subsubsection{The Lamperti--Itô--Taylor-$2$ scheme}
By using the definition of $A_{\gamma}$ in~\eqref{eq:def_Agamma} with $\gamma=2$, we see that
\begin{equation*}
A_{2} = \{ \emptyset,\ (0),\ (1),\ (1,1),\ (0,1),\ (1,0),\ (0,0),\ (1,1,1),\ (1,1,0),\ (1,0,1),\ (0,1,1),\ (1,1,1,1) \}.
\end{equation*}
In other words, the additional terms compared to the Itô--Taylor-$1.5$ scheme are the four terms corresponding to the multi-indices $(1,1,0),\ (1,0,1),\ (0,1,1),\ (1,1,1,1)$. We compute
\begin{equation*}
\id_{(1,1,0)}(Y(t_{m})) = H''(Y(t_{m})),
\end{equation*}
\begin{equation*}
\id_{(1,0,1)}(Y(t_{m})) = 0,
\end{equation*}
\begin{equation*}
\id_{(0,1,1)}(Y(t_{m})) = 0,
\end{equation*}
and
\begin{equation*}
\id_{(1,1,1,1)}(Y(t_{m})) = 0.
\end{equation*}
Combining the above with~\eqref{eq:DefMultIndInt}, the additional term compared to the $\IT 1.5$ scheme is
\begin{equation*}
\id_{(1,1,0)}(X(t_{m})) I_{(1,1,0)}(t_{m},t_{m+1}) = H''(Y(t_{m})) I_{(1,1,0)}(t_{m},t_{m+1}).
\end{equation*}
We describe how to approximate $I_{(1,1,0)}(t_{m},t_{m+1})$ in Section~\ref{sec:comp_iter_ints}. Inserting the above into~\eqref{eq:IT_approx_add} with $\gamma = 2$ gives us the Itô--Taylor-$2$ ($\IT 2.0$) scheme given by
\begin{align*}
Y^{\IT 2.0}_{m+1} &= Y^{\IT 2.0}_{m} + H(Y^{\IT 2.0}_{m}) \Delta t + \Delta B_{m} \\ &+ H'(Y^{\IT 2.0}_{m}) I_{(1,0)}(t_{m},t_{m+1}) \\ &+ \left( H(Y^{\IT 2.0}_{m}) H'(Y^{\IT 2.0}_{m}) + \frac{1}{2} H''(Y^{\IT 2.0}_{m}) \right) \frac{\Delta t^{2}}{2} \\ &+ H''(Y^{\IT 2.0}_{m}) I_{(1,1,0)}(t_{m},t_{m+1}),\ m=0,\ldots,M-1,
\end{align*}
with $\Delta t = T/M$ and $\Delta B_{m} = B(t_{m+1})-B(t_{m})$, initialised with $Y^{\LIT 2.0}_{0} = \Phi(x_{0})$. We define the Lamperti--Itô--Taylor-$2.0$ ($\LIT 2.0$) scheme by
\begin{equation*}
X^{\LIT 2.0}_{m} = \Phi^{-1}(Y^{\IT 2.0}_{m}),\ m=0,\ldots,M.
\end{equation*}
By Corollary~\ref{cor:LIT_conv}, if $k \in \{4,6,8, \ldots \}$ then, for every $p \in (0,\infty)$, the $\LIT 2.0$ scheme is p-strongly convergent with order $2.0$. 

\subsection{Computation of iterated integrals}\label{sec:comp_iter_ints}
The goal of this section is to describe the computation and approximation of
\begin{equation}\label{eq:to_implement}
\Delta B_{m},\ I_{(1,0)}(t_{m},t_{m+1}),\ I_{(1,1,0)}(t_{m},t_{m+1}),\ m=0,\ldots,M-1,
\end{equation}
on the discrete time grids used in the numerical experiments in Section~\ref{sec:numExp}.

We use three different discrete time grids for each sample for the numerical convergence analysis in Section~\ref{sec:numExp}, and they corresponding to three different time grid parameters
\begin{equation*}
M^{\app}, M^{\refe}, M^{\aux} \in \mathbb{N}
\end{equation*}  
with $M^{\app} < M^{\refe} < M^{\aux}$. In the following, we consider one sample, and we thus fix one approximation time grid, one reference time grid, and one auxiliary time grid. The numerical approximation is computed using the approximation time grid $t^{\app}_{m} = m \Delta t^{\app}$, for $m=0,\ldots,M^{\app}$, with $\Delta t^{\app} = T/M^{\app}$. The reference solution is computed using the reference time grid $t^{\refe}_{m} = m \Delta t^{\refe}$, for $m=0,\ldots,M^{\refe}$, with $\Delta t^{\refe} = T/M^{\refe}$. The auxiliary time grid $t^{\aux}_{m} = m \Delta t^{\aux}$, for $m=0,\ldots,M^{\aux}$, with $\Delta t^{\aux} = T/M^{\aux}$, is used to approximate some iterated integrals on the reference time grid. Lastly, we also introduce the quotients
\begin{equation*}
R^{aux} = \frac{M^{aux}}{M^{ref}},\ R^{ref} = \frac{M^{ref}}{M^{approx}},
\end{equation*}
and we assume that $M^{\app} < M^{\refe} < M^{\aux}$ are such that $R^{aux}, R^{ref} \in \{1,2,3,4,\ldots \}$ are integers. Whenever we refer to $t_{m}$, this means that the statement applies to any of the considered discrete time grids.

We remark that, to obtain strong convergence between an approximation and a reference solution, we have to make sure that all quantities in~\eqref{eq:to_implement} are computed using the same sample path of the Brownian motion. In other words, we first compute and approximate~\eqref{eq:to_implement} on the reference grid and then use this to compute~\eqref{eq:to_implement} on the approximation grid.

The following lemma, known as Chen's lemma, will be used in the following.
\begin{lemma}\label{lem:Chen_lemma}
Let $0 \leq s \leq u \leq t \leq T$. Then the following holds
\begin{equation*}
I_{\alpha}(s,t) = \sum_{\ell=0}^{L} I_{(\alpha_{1},\ldots,\alpha_{\ell})}(s,u) \times I_{(\alpha_{\ell+1},\ldots,\alpha_{L})}(u,t),
\end{equation*}
where $L = | \alpha| \in \mathbb{N}$ is the length of the multi-index $\alpha = (\alpha_{1},\ldots,\alpha_{L})$.
\end{lemma}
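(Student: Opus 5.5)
I would prove Chen's lemma by induction on the length $L$ of $\alpha$, using the recursive definition \eqref{eq:Ialpha_simple_recur}. Throughout, $I_{\emptyset}(\cdot,\cdot)=1$ is the convention for the empty multi-index, so that the $\ell=0$ term in the sum is $I_{\alpha}(u,t)$ and the $\ell=L$ term is $I_{\alpha}(s,u)$.

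\textbf{Base case.} For $L=1$ the claim reads
\begin{equation*}
I_{(\alpha_1)}(s,t)=I_{(\alpha_1)}(u,t)+I_{(\alpha_1)}(s,u).
\end{equation*}
This is just additivity of the integral $\int_s^t \diff B^{\alpha_1} = \int_s^u \diff B^{\alpha_1} + \int_u^t \diff B^{\alpha_1}$, which holds both for $\alpha_1=0$ (Lebesgue integral) and $\alpha_1=1$ (Brownian increment). The case $L=0$ is trivially $1=1$.

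\textbf{Inductive step.} Assume the identity for all multi-indices of length $L-1$, in particular for $\alpha-$ and every intermediate point $u\le s_1$. Split the outer integral at $u$:
\begin{equation*}
I_{\alpha}(s,t)=\int_s^u I_{\alpha-}(s,s_1)\diff B^{\alpha_L}(s_1)+\int_u^t I_{\alpha-}(s,s_1)\diff B^{\alpha_L}(s_1).
\end{equation*}
The first term is $I_{\alpha}(s,u)$, i.e. the $\ell=L$ term. For $s_1\in[u,t]$, apply the induction hypothesis to $I_{\alpha-}(s,s_1)$ with intermediate point $u$:
\begin{equation*}
I_{\alpha-}(s,s_1)=\sum_{\ell=0}^{L-1} I_{(\alpha_1,\ldots,\alpha_\ell)}(s,u)\, I_{(\alpha_{\ell+1},\ldots,\alpha_{L-1})}(u,s_1).
\end{equation*}
Substitute this into the second integral. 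Each factor $I_{(\alpha_1,\ldots,\alpha_\ell)}(s,u)$ is $\mathcal F_u$-measurable and has finite second moment by Lemma~\ref{lem:Ialpha_bounded}, so it can be pulled out of the integral over $[u,t]$. The remaining integral is
\begin{equation*}
\int_u^t I_{(\alpha_{\ell+1},\ldots,\alpha_{L-1})}(u,s_1)\diff B^{\alpha_L}(s_1)=I_{(\alpha_{\ell+1},\ldots,\alpha_L)}(u,t)
\end{equation*}
by \eqref{eq:Ialpha_simple_recur}. Summing over $\ell=0,\ldots,L-1$ and adding the $\ell=L$ term gives the claimed formula.

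\textbf{Main obstacle.} The only delicate step is pulling an $\mathcal F_u$-measurable random factor out of an Itô integral over $[u,t]$ when $\alpha_L=1$. I would justify it first for simple integrands, then extend by $L^2$-approximation. The integrand $Z\,I_{\beta}(u,s_1)$ is adapted on $[u,t]$, and it is square integrable, e.g. after localisation or using moment bounds for iterated integrals. Alternatively, I would cite the standard property $\int_u^t Z\,\phi\,\diff B=Z\int_u^t\phi\,\diff B$ for $Z\in\mathcal F_u$.
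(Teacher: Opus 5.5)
Your proof is correct and takes the route the paper indicates: the paper omits the argument and only states that the lemma follows by induction on the length $L$ of $\alpha$, which is what you carry out by splitting the outermost integral at $u$ and applying the induction hypothesis to $\alpha-$. Your justification for pulling the $\mathcal{F}_u$-measurable factor $I_{(\alpha_1,\ldots,\alpha_\ell)}(s,u)$ out of the integral over $[u,t]$ covers the one technical point the paper leaves unaddressed; square integrability of the product also follows directly from the independence of that factor from $I_{(\alpha_{\ell+1},\ldots,\alpha_{L-1})}(u,s_1)$.
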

The original version of the lemma for deterministic path integrals was proved in~\cite{MR73174}. Similarly, Lemma~\ref{lem:Chen_lemma} can be proved by induction over the multi-index length $L \in \mathbb{N}$. We omit it since it is not the focus of this work.  

We first generate $\Delta B$ and $I_{(1,0)}$ on the auxiliary time grid, and then compute $\Delta B$ and $I_{(1,0)}$ on the reference and approximate time grids based on the generated values on the auxiliary time grid. Thereafter, we describe how to approximate $I_{(1,1,0)}$ on the reference and approximate discrete time grids in a consistent manner with respect to the Brownian motion sample path.

Since
\begin{equation*}
I_{(1,0)}(s,t) = \int_{s}^{t} I_{(1)}(s,s_{1}) \diff s_{1} = \int_{s}^{t} B(s_{1})-B(s) \diff s_{1}
\end{equation*}
depends on the sample path of the Brownian motion and cannot be simplified further, we have to generate $I_{(1,0)}$ correlated with $\Delta B$ on the auxiliary time grid. We let
\begin{equation*}
Z_{1,m} \sim N(0,1),\ m=0,\ldots, M^{aux}-1
\end{equation*}
and
\begin{equation*}
Z_{2,m} \sim N(0,1),\ m=0,\ldots,M^{aux}-1 
\end{equation*}
be independent standard normal random variables. Then, in distribution, we have that
\begin{equation*}
\Delta B^{aux}_{m} = B(t_{m+1}^{aux}) - B(t_{m}^{aux}) = \sqrt{\Delta t^{aux}} Z_{1,m},\ m=0,\ldots,M^{aux}-1
\end{equation*}
and
\begin{equation*}
I_{(1,0)}(t_{m}^{aux},t_{m+1}^{aux}) = \frac{1}{2} \Delta t^{aux} \Delta B^{aux}_{m} + \frac{(\Delta t^{aux})^{3/2}}{2 \sqrt{3}} Z_{2,m},\ m=0,\ldots,M^{aux}-1.
\end{equation*}
The above follows from, for each $m=0,\ldots,M^{aux}-1$, verifying that both sides are $2$-dimensional Gaussian random variables with the same mean and covariance matrix. This gives us $\Delta B$ and $I_{(1,0)}$ on the auxiliary time grid $t_{0}^{aux},\ldots,t_{M^{aux}-1}^{aux}$. 

Next, we compute $\Delta B$ and $I_{(1,0)}$ on the reference and approximation time grids based on the corresponding quantities on the auxiliary time grid as
\begin{equation*}
\Delta B^{ref}_{m} = B(t_{m+1}^{ref}) - B(t_{m}^{ref}) = \sum_{j=0}^{R^{aux}-1} \Delta B^{aux}_{m R^{aux}+j},\ m=0,\ldots,M^{ref}-1,
\end{equation*}
\begin{equation*}
\Delta B^{approx}_{m} = B(t_{m+1}^{approx}) - B(t_{m}^{approx}) = \sum_{j=0}^{R^{ref}-1} \Delta B^{ref}_{m R^{ref}+j},\ m=0,\ldots,M^{approx}-1,
\end{equation*}
and
\begin{equation}\label{eq:I10_ref}
  \begin{split}
    I_{(1,0)}(t_{m}^{ref},t_{m+1}^{ref}) = \sum_{j=0}^{R^{aux}-1} &\Delta t^{aux} ( B(t_{m}^{ref} + j \Delta t^{aux})-B(t_{m}^{ref})) \\ &+ I_{(1,0)}(t_{m}^{ref} + j \Delta t^{aux},t_{m}^{ref} + (j+1) \Delta t^{aux}),
  \end{split}
\end{equation}
for $m=0,\ldots,M^{ref}-1$, and
\begin{equation}\label{eq:I10_approx}
  \begin{split}
    I_{(1,0)}(t_{m}^{approx},t_{m+1}^{approx}) = \sum_{j=0}^{R^{ref}-1} &\Delta t^{ref} ( B(t_{m}^{approx} + j \Delta t^{ref})-B(t_{m}^{approx})) \\ &+ I_{(1,0)}(t_{m}^{approx} + j \Delta t^{ref},t_{m}^{approx} + (j+1) \Delta t^{ref}),
  \end{split}
\end{equation}
for $m=0,\ldots,M^{approx}-1$. The formulas given in~\eqref{eq:I10_ref} and in~\eqref{eq:I10_approx} follow from Lemma~\ref{lem:Chen_lemma}. Thus, we can compute $I_{(1,0)}(t_{m}^{ref},t_{m+1}^{ref})$ using~\eqref{eq:I10_ref}, for all $m=0,\ldots,M^{ref}$, from $I_{(1,0)}(t_{m}^{aux},t_{m+1}^{aux})$ and $\Delta B_{m}^{aux}$, both for all $m=0,\ldots,M^{aux}$. Similarly, we can compute $I_{(1,0)}(t_{m}^{approx},t_{m+1}^{approx})$ using~\eqref{eq:I10_approx}, for all $m=0,\ldots,M^{approx}$, from $I_{(1,0)}(t_{m}^{ref},t_{m+1}^{ref})$ and from $\Delta B_{m}^{ref}$, both for all $m=0,\ldots,M^{ref}$.

Let us now consider $I_{(1,1,0)}$. Since the outermost integral in $I_{(1,1,0)}$ is a Lebesgue integral, we use the trapezoidal rule to approximate $I_{(1,1,0)}(t_{m}^{ref},t_{m+1}^{ref})$ as
\begin{equation}\label{eq:I110_approx_ref}
  \begin{split}
    I_{(1,1,0)}(t_{m}^{ref},t_{m+1}^{ref}) &= \int_{t_{m}^{ref}}^{t_{m+1}^{ref}} I_{(1,1)}(t_{m}^{ref},s_{1}) \diff s_{1} \\ &\approx \Delta t^{aux} \left( \frac{1}{2} I_{(1,1)}(t_{m}^{ref},t_{m}^{ref}) + \sum_{j=1}^{R^{aux}-1} I_{(1,1)}(t_{m}^{ref},t_{m}^{ref} + j \Delta t^{aux}) \right. \\  & \left.+ \frac{1}{2} I_{(1,1)}(t_{m}^{ref},t_{m+1}^{ref}) \right),\ m=0,\ldots,M^{ref}-1.
  \end{split}
\end{equation}
The terms involving $I_{(1,1)}$ can be calculated from the Brownian motion as
\begin{equation*}
I_{(1,1)}(s,t) = \frac{1}{2}\left( (B(t)-B(s))^2 - (t-s) \right),\ 0 \leq s \leq t \leq T,
\end{equation*}
which means that~\eqref{eq:I110_approx_ref} provides us with a formula to implement an approximation of $I_{(1,1,0)}(t_{m}^{ref},t_{m+1}^{ref})$. Similarly, the analogous formula holds for $I_{(1,1,0)}(t_{m}^{approx},t_{m+1}^{approx})$
\begin{equation*}
  \begin{split}
    I_{(1,1,0)}(t_{m}^{approx},t_{m+1}^{approx}) &= \int_{t_{m}^{approx}}^{t_{m+1}^{approx}} I_{(1,1)}(t_{m}^{approx},s_{1}) \diff s_{1} \\ &\approx \Delta t^{ref} \left( \frac{1}{2} I_{(1,1)}(t_{m}^{approx},t_{m}^{approx}) + \sum_{j=1}^{R^{ref}-1} I_{(1,1)}(t_{m}^{approx},t_{m}^{approx} + j \Delta t^{ref}) \right. \\  & \left.+ \frac{1}{2} I_{(1,1)}(t_{m}^{approx},t_{m+1}^{approx}) \right),\ m=0,\ldots,M^{approx}-1.
  \end{split}
\end{equation*}
This completes the description of the iterated integrals needed for the numerical experiments in Section~\ref{sec:numExp}.

\section{Numerical experiments}\label{sec:numExp}In this section, we provide numerical experiments to numerically verify that the proposed Lamperti--Itô--Taylor-$\gamma$ ($\LIT \gamma$) schemes are boundary-preserving and $2$-strongly convergent with orders as stated and proved in Corollary~\ref{cor:LIT_conv}. More precisely, we numerically verify that the Lamperti--Euler--Maruyama (LEM) scheme achieves $2$-strong convergence order $1$, the Lamperti--Itô--Taylor-$1.5$ (LIT$1.5$) achieves $2$-strong convergence order $1.5$, and the Lamperti--Itô--Taylor-$2.0$ (LIT$2.0$) scheme achieves $2$-strong convergence order $2$. To this end, recall that $M \in \mathbb{N}$ is the number of subintervals $[t_{m},t_{m+1}]$, $m=0,\ldots,M-1$, each of size $\Delta t = T/M$, in the time discretisation, and we let $\Delta B_{m} = B(t_{m+1}) - B(t_{m})$ be the Brownian motion increment over the interval $[t_{m},t_{m+1}]$.

We here consider the following noise-scaled version of the SDE in~\eqref{eq:SDE}
\begin{equation*}
\left\lbrace
\begin{aligned}
& \diff X(t) = f(X(t)) \diff t + \lambda g(X(t)) \diff B(t),\ t \in (0,T], \\ 
& X(0) = x_{0} \in \D,
\end{aligned}
\right.
\end{equation*}
where $\lambda>0$ is introduced to more easily show lack of boundary preservation for the classical numerical schemes that we compare the $\LIT \gamma$ schemes with. Note that the effect of introducing $\lambda>0$ can also be achieved by running the numerical experiments for a larger $T \in (0,\infty)$. To put this into the framework of Section~\ref{sec:setting}, we let $\Phi(r) = \int_{w_{0}}^{r} \frac{1}{g(w)} \diff w$, for $r \in \D$. Then $Y(t) = \Phi(X(t))$ satisfies
\begin{equation*}
\diff Y(t) = \left( \frac{f(\Phi^{-1}(Y(t)))}{g(\Phi^{-1}(Y(t)))} - \frac{\lambda^{2}}{2} g'(\Phi^{-1}(Y(t))) \right) \diff t + \lambda \diff B(t).
\end{equation*}
Note how the introduced noise-scaling parameter $\lambda>0$ changes~\eqref{eq:SDE-Lamp}.

We either use $T = 0.4$ or $T = 1$, and we either use a fixed value of $x_{0}$ or use $x_{0}$ uniformly distributed on the invariant domain $\D$.

Boundary preservation of the $\LIT \gamma$ schemes is compared to the lack of boundary preservation of the following well-known schemes for SDEs
\begin{itemize}
\item the Euler--Maruyama scheme (denoted by EM), see for instance \cite{MR1214374}
$$
X^{\EM}_{m+1}=X^{\EM}_m+ f(X^{\EM}_{m})\Delta t + \lambda g(X^{\EM}_m) \Delta B_{m},\ m=0,\ldots,M-1,
$$
initialised with $X^{\EM}_{0} = x_{0}$.
\item the semi-implicit Euler--Maruyama scheme (denoted by SEM), see for instance \cite{MR1214374}
$$
X^{\SEM}_{m+1}=X^{\SEM}_m+ f(X^{\SEM}_{m+1})\Delta t + \lambda g(X^{\SEM}_m) \Delta B_{m},\ m=0,\ldots,M-1,
$$
initialised with $X^{\SEM}_{0} = x_{0}$.
\item the tamed Euler scheme (denoted by TE), see for instance \cite{MR2985171}
$$
X^{\TE}_{m+1}=X^{\TE}_m+ f^{M}(X^{\TE}_{m})\Delta t + g^{M}(X^{\TE}_m) \Delta B_{m},\ m=0,\ldots,M-1,
$$
initialised with $X^{\TE}_{0} = x_{0}$, where
$$
f^{M}(x) = \frac{f(x)}{1 + M^{-1/2} |f(x)| + M^{-1/2} | \lambda g(x)|^{2}}
$$
$$
g^{M}(x) = \frac{\lambda g(x)}{1 + M^{-1/2} |f(x)| + M^{-1/2} | \lambda g(x)|^{2}}.
$$
\end{itemize}
Boundary preservation and lack thereof are presented in tables showing, for each scheme, the number of sample paths out of $100$ that only contained values in the invariant domain $\D$. We estimate the $2$-strong convergence orders in Corollary~\ref{cor:LIT_conv} for each considered scheme by computing the following $L^{2}(\Omega)$-error
\begin{equation}\label{eq:L2err}
\left( \E \left[ \sup_{m=0,\ldots,M} \left|X^{approx}_{m} - X^{ref}_{m} \right|^{2} \right] \right)^{1/2},
\end{equation}
where the reference solution $X^{ref}$ is computed using the same scheme as the approximation $X^{approx}$ but with a finer discretisation grid parameter $\Delta t^{ref} = 10^{-7}$, and displaying these errors in loglog plots. The expected value in~\eqref{eq:L2err} is approximated using $300$ Monte Carlo samples, and we have numerically verified that $300$ Monte Carlo samples is sufficient to observe the $2$-strong convergence order. To approximate $I_{(1,1,0)}$, as described in Section~\ref{sec:comp_iter_ints}, we use $M^{aux} = R^{aux} M^{ref} = 32 M^{ref}$ in the implementation.

The particular examples of SDEs that we consider in this section are typical for Lamperti-based schemes, and they are taken from previous work by the author \cite{Ulander2024AB,MR4737060}. We refer to these works for details on what these SDEs model.

\subsection{Allen--Cahn type SDE}
Here we consider the Allen--Cahn SDE given by
\begin{equation}\label{eq:AC_SDE}
\left\lbrace
\begin{aligned}
& \diff X(t) = \left( X(t) - X(t)^{3} \right) \diff t + \lambda \left( 1 - X(t)^{2} \right) \diff B(t),\ t \in (0,T], \\ 
& X(0) = x_{0} \in \D = (-1,1),
\end{aligned}
\right.
\end{equation}
where $\lambda > 0$ is the noise scaling parameter. The coefficient functions $f(r) = r - r^3$ and $g(r) = 1 - r^2$ satisfy Assumptions~\ref{ass:f},~\ref{ass:g}, and~\ref{ass:fg} for any $k \in \mathbb{N}$. Thus, by Corollary~\ref{cor:LIT_conv}, Lamperti--Itô--Taylor-$\gamma$ schemes of arbitrarily high order are applicable.

The transformed SDE is in this case given by
\begin{equation*}
\left\lbrace
\begin{aligned}
& \diff Y(t) = (1 + \lambda^{2}) \Phi^{-1}(Y(t)) \diff t + \lambda \diff B(t),\ t \in (0,T], \\ 
& Y(0) = \Phi(x_{0}) \in \mathbb{R},
\end{aligned}
\right.
\end{equation*}
where
\begin{equation*}
\Phi(r) = \frac{1}{2} \log \left( \frac{1+r}{1-r} \right),\ r \in \D,
\end{equation*}
and
\begin{equation*}
\Phi^{-1}(r) = \frac{e^{2r}-1}{e^{2r}+1},\ r \in \mathbb{R},
\end{equation*}
where we for simplicity choose $w_{0}=0$; that is, $H(r) = (1 + \lambda^{2}) \Phi^{-1}(r)$, for $r \in \mathbb{R}$. We present numerical experiments for $\LIT \gamma$ schemes for $\gamma \in \{ 0.5,1.5,2 \}$ (recall that $\LEM$ and $\LIT1.0$ coincide). Therefore, by Section~\ref{sec:LIT_schemes}, we need to evaluate $H,H',H''$. By direct differentiation, we have that
\begin{equation*}
H'(r) = (1 + \lambda^{2}) g(\Phi^{-1}(r)),\ r \in \mathbb{R},
\end{equation*}
and that
\begin{equation*}
H''(r) = (1 + \lambda^{2}) g'(\Phi^{-1}(r)) g(\Phi^{-1}(r)),\ r \in \mathbb{R}.
\end{equation*}

In Figure~\ref{num:AC_path_comp}, we present sample paths of the considered numerical schemes applied to the Allen--Cahn SDE in~\eqref{eq:AC_SDE}, with the same Brownian motion sample path used for all schemes. Figure~\ref{num:AC_path_comp} clearly show that the comparison schemes $\EM$, $\SEM$, and $\TE$ leave the invariant domain $\D = (-1,1)$ of~\eqref{eq:AC_SDE}, and are therefore not boundary-preserving. In contrast, Figure~\ref{num:AC_path_comp} also shows that the sample paths of the $\LIT \gamma$ schemes do not leave the invariant domain $\D = (-1,1)$, confirming that the $\LIT \gamma$ schemes are boundary-preserving. Next, in Table~\ref{tb:AC}, we present, for each considered scheme, the proportion out of $100$ samples that only produced values inside the invariant domain $\D = (-1,1)$. Table~\ref{tb:AC} numerically verify that the $\LIT \gamma$ schemes, for $\gamma \in \{0.5,1.5,2 \}$, are boundary-preserving and that the comparison schemes $\EM$, $\SEM$, and $\TE$ are not boundary-preserving. 

\begin{figure}[htp]
\begin{center}
  \includegraphics[width=0.8\textwidth]{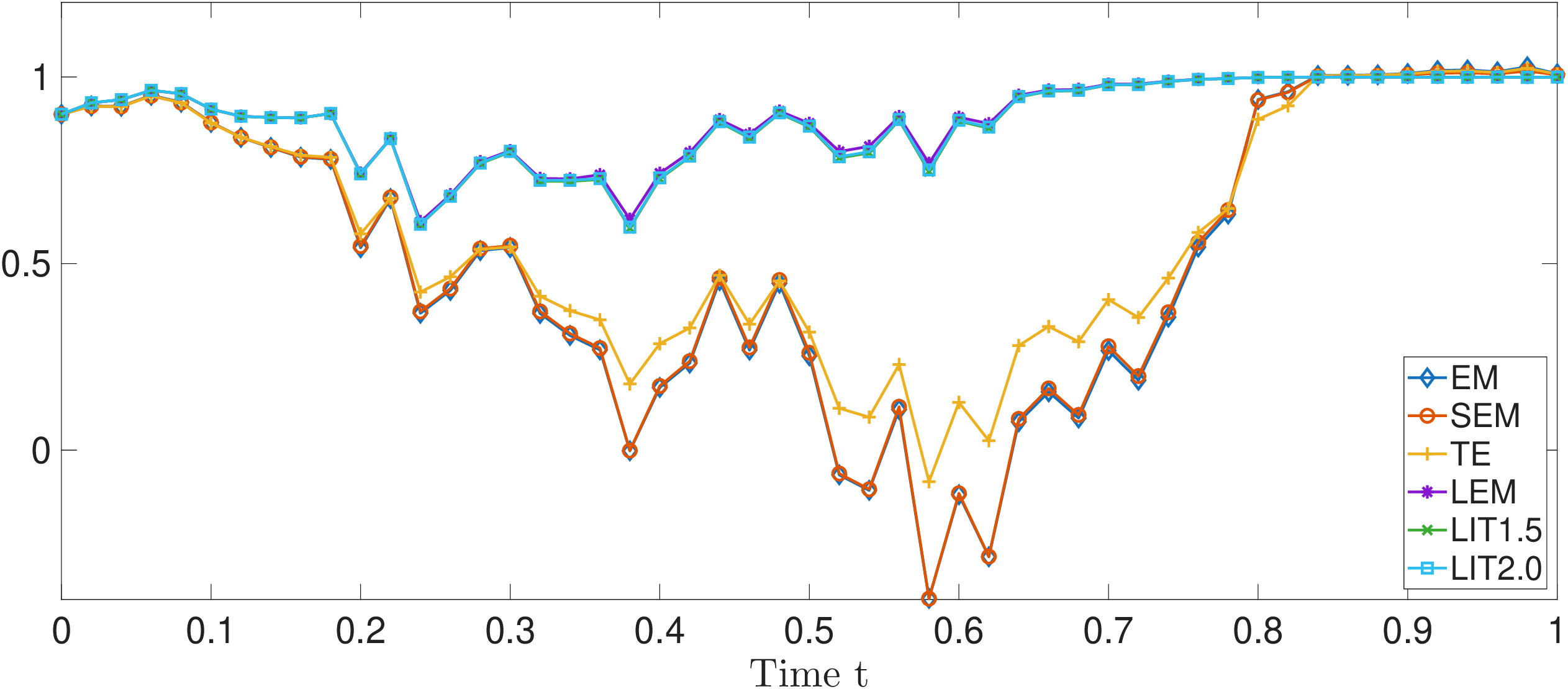}
  \caption{Path comparison of the Lamperti--Euler--Maruyama ($\LEM$) scheme, Lamperti--Itô--Taylor-1.5 ($\LIT 1.5$) scheme, Lamperti--Itô--Taylor-2.0 ($\LIT 2.0$) scheme, Euler--Maruyama ($\EM$) scheme, the semi-implicit Euler--Maruyama ($\SEM$) scheme, and the tamed Euler ($\TE$) scheme for the Allen--Cahn SDE in~\eqref{eq:AC_SDE} with parameters $\lambda = 2$, $x_{0}=0.9$, $T = 0.4$ and $M=50$.}\label{num:AC_path_comp}
  \end{center}
\end{figure}

\begin{table}[!htbp]
\begin{center}
\begin{tabular}{||c c c c c c c||}
 \hline
 $\lambda$ & $\LEM$ & $\LIT 1.5$ & $\LIT 2.0$ & $\EM$ & $\SEM$ & $\TE$ \\ [0.5ex]
 \hline\hline
 $1$ & $100/100$ & $100/100$ & $100/100$ & $99/100$ & $99/100$ & $99/100$\\
 \hline
 $2$ & $100/100$ & $100/100$ & $100/100$ & $23/100$ & $28/100$ & $42/100$ \\
 \hline
 $3$ & $100/100$ & $100/100$ & $100/100$ & $0/100$  & $0/100$& $13/100$ \\ [1ex]
 \hline
\end{tabular}
\caption{Proportion of samples containing only values in $(-1,1)$ out of $100$ simulated sample paths for the Lamperti--Euler--Maruyama ($\LEM$) scheme, Lamperti--Itô--Taylor-1.5 ($\LIT 1.5$) scheme, Lamperti--Itô--Taylor-2.0 ($\LIT 2.0$) scheme, Euler--Maruyama ($\EM$) scheme, the semi-implicit Euler--Maruyama ($\SEM$) scheme, and the tamed Euler ($\TE$) scheme for the Allen--Cahn SDE in~\eqref{eq:AC_SDE} for three choices of $\lambda>0$. The parameters used are: $T=1$, $\Delta t = 1/50$ and with $x_{0}$ uniformly distributed on $\D = (-1,1)$ for each sample. \label{tb:AC}}
\end{center}
\end{table}

Finally, in Figure~\ref{num:AC_conv_plot}, we present the $L^{2}(\Omega)$-errors of the $\LIT \gamma$ schemes for $\gamma \in \{0.5,1.5,2 \}$ together with reference lines with slopes $1,1.5$, and $2$, respectively. Figure~\ref{num:AC_conv_plot} numerically verifies the convergence result in Corollary~\ref{cor:LIT_conv}.

\begin{figure}[htp]
\begin{center}
  \includegraphics[width=0.8\textwidth]{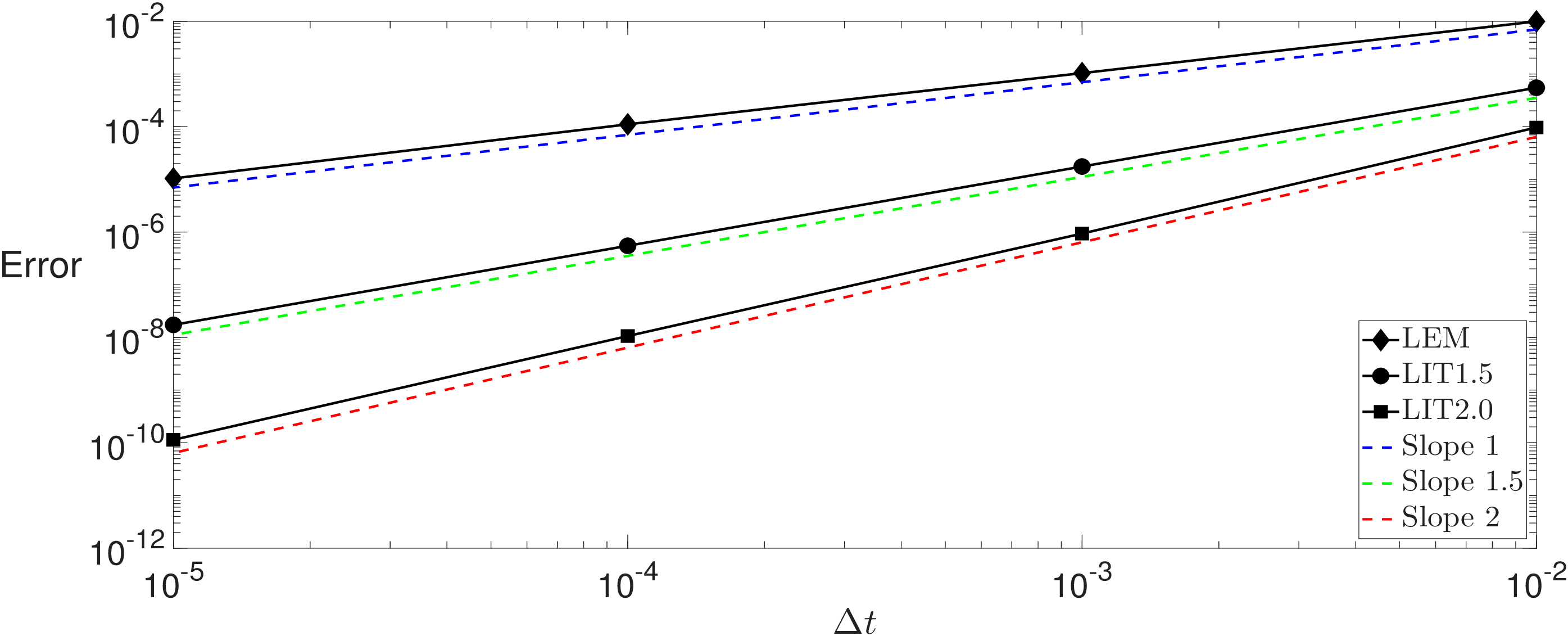}
  \caption{$L^{2}(\Omega)$-errors on the interval $[0,1]$ of the Lamperti--Euler--Maruyama ($\LEM$) scheme, the Lamperti--Itô--Taylor-$1.5$ ($\LIT 1.5$) scheme, and the Lamperti--Itô--Taylor-$2.0$ ($\LIT 2.0$) scheme for the Allen--Cahn SDE in~\eqref{eq:AC_SDE} for $\lambda=1$ and reference lines with slopes $1$, $1.5$, and $2$. Averaged over $300$ samples and $x_{0}=0$.}\label{num:AC_conv_plot}
  \end{center}
\end{figure}

\subsection{Nagumo-type SDE}
Here we consider the Nagumo-type SDE given by
\begin{equation}\label{eq:Nagumo_SDE}
\left\lbrace
\begin{aligned}
& \diff X(t) = X(t) \left( 1 - X(t) \right) \left( X(t) - \eta \right) \diff t + X(t) \left( 1 - X(t) \right) \diff B(t),\ t \in (0,T], \\ 
& X(0) = x_{0} \in \D = (0,1),
\end{aligned}
\right.
\end{equation}
for some $\eta \in (0,1)$. The coefficient functions $f(r) = r (1-r)(r-\eta)$ and $g(r) = r (1-r)$ satisfy Assumptions~\ref{ass:f},~\ref{ass:g}, and~\ref{ass:fg} for any $k \in \mathbb{N}$. Thus, by Corollary~\ref{cor:LIT_conv}, Lamperti--Itô--Taylor-$\gamma$ schemes of arbitrarily high order are applicable.

The transformed SDE is in this case given by
\begin{equation*}
\left\lbrace
\begin{aligned}
& \diff Y(t) = \left(- \left( a + \frac{\lambda^{2}}{2} \right) + (1+\lambda^{2}) \Phi^{-1}(Y(t)) \right) \diff t + \lambda \diff B(t),\ t \in (0,T], \\ 
& Y(0) = \Phi(x_{0}) \in \mathbb{R},
\end{aligned}
\right.
\end{equation*}
where
\begin{equation*}
\Phi(r) = \log \left( \frac{r}{1-r} \right),\ r \in \D,
\end{equation*}
and
\begin{equation*}
\Phi^{-1}(r) = \frac{e^{r}}{e^{r} + 1},\ r \in \mathbb{R},
\end{equation*}
where we for simplicity choose $w_{0} = 1/2$; that is $H(r) = -\left(a + \frac{\lambda^{2}}{2} \right) + (1 + \lambda^{2}) \Phi^{-1}(r)$, for $r \in \mathbb{R}$. We need to evaluate $H,H',H''$, since we present numerical experiments for $\LIT \gamma$ schemes for $\gamma \in \{0.5,1.5,2 \}$.

Direct differentiation yields
\begin{equation*}
H'(r) = (1+\lambda^{2}) g(\Phi^{-1}(r)),\ r \in \mathbb{R},
\end{equation*}
and
\begin{equation*}
H''(r) = (1+\lambda^{2}) g'(\Phi^{-1}(r)) g(\Phi^{-1}(r)),\ r \in \mathbb{R}.
\end{equation*}

We first, in Figure~\ref{num:Nagumo_path_comp}, present sample paths of the considered schemes applied to the Nagumo SDE in~\eqref{eq:Nagumo_SDE} using the same Brownian motion sample paths for all schemes. Figure~\ref{num:Nagumo_path_comp} shows that the comparison schemes $\EM$, $\SEM$, and $\TE$ leave the invariant domain $\D = (0,1)$ while the $\LIT \gamma$ do not leave the invariant domain $\D=(0,1)$. To further strengthen the conclusion that the $\LIT \gamma$ schemes are boundary-preserving and that the comparison schemes are not boundary-preserving, we present in Table~\ref{tb:Nagumo} the proportion out of $100$ samples for each considered scheme that only contained values in the invariant domain $\D = (0,1)$. Figure~\ref{num:Nagumo_path_comp} and Table~\ref{tb:Nagumo} numerically confirm that the $\LIT \gamma$ schemes are boundary-preserving and that the comparison schemes are not boundary-preserving.

\begin{figure}[htp]
\begin{center}
  \includegraphics[width=0.8\textwidth]{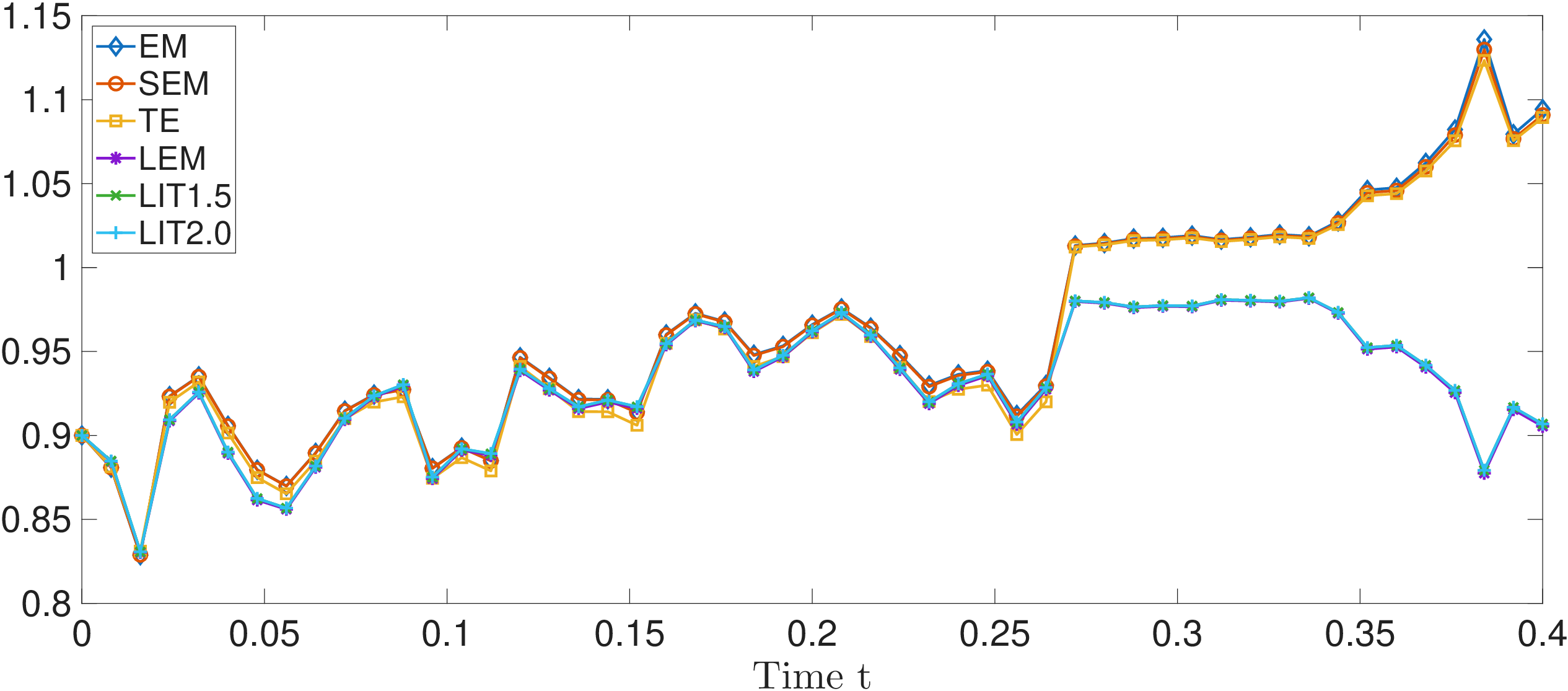}
  \caption{Path comparison of the Lamperti--Euler--Maruyama ($\LEM$) scheme, Lamperti--Itô--Taylor-1.5 ($\LIT 1.5$) scheme, Lamperti--Itô--Taylor-2.0 ($\LIT 2.0$) scheme, Euler--Maruyama ($\EM$) scheme, the semi-implicit Euler--Maruyama ($\SEM$) scheme, and the tamed Euler ($\TE$) scheme applied to the Nagumo SDE in~\eqref{eq:Nagumo_SDE} with parameters $\gamma= 1/4, \lambda = 4$, $x_{0}=0.9$, $T = 0.4$ and $M=50$.}\label{num:Nagumo_path_comp}
  \end{center}
\end{figure}

\begin{table}[!htbp]
\begin{center}
\begin{tabular}{||c c c c c c c||}
 \hline
 $\lambda$ & $\LEM$ & $\LIT 1.5$ & $\LIT 2.0$ & $\EM$ & $\SEM$ & $\TE$ \\ [0.5ex]
 \hline\hline
 $2$ & $100/100$ & $100/100$ & $100/100$ & $100/100$ & $100/100$ & $100/100$\\
 \hline
 $3$ & $100/100$ & $100/100$ & $100/100$ & $73/100$ & $75/100$ & $74/100$ \\
 \hline
 $4$ & $100/100$ & $100/100$ & $100/100$ & $21/100$  & $21/100$& $25/100$ \\ [1ex]
 \hline
\end{tabular}
\caption{Proportion of samples containing only values in $\D = (0,1)$ out of $100$ simulated sample paths for the Lamperti--Euler--Maruyama ($\LEM$) scheme, Lamperti--Itô--Taylor-1.5 ($\LIT 1.5$) scheme, Lamperti--Itô--Taylor-2.0 ($\LIT 2.0$) scheme, Euler--Maruyama ($\EM$) scheme, the semi-implicit Euler--Maruyama ($\SEM$) scheme, and the tamed Euler ($\TE$) scheme for the Nagumo SDE in~\eqref{eq:Nagumo_SDE} for three choices of $\lambda>0$. The parameters used are: $\gamma=1/4$, $T=1$, $\Delta t = 1/50$ and with $x_{0}$ uniformly distributed on $\D = (0,1)$ for each sample. \label{tb:Nagumo}}
\end{center}
\end{table}

In Figure~\ref{num:Nagumo_conv_plot}, we present the $L^{2}(\Omega)$-errors of the $\LIT \gamma$ schemes for $\gamma \in \{0.5,1.5,2 \}$ together with reference lines with slopes $1,1.5$, and $2$, respectively. The $L^{2}(\Omega)$-errors lines and the references lines in Figure~\ref{num:Nagumo_conv_plot} align well, numerically confirming the convergence result in Corollary~\ref{cor:LIT_conv}.

\begin{figure}[htp]
\begin{center}
  \includegraphics[width=0.8\textwidth]{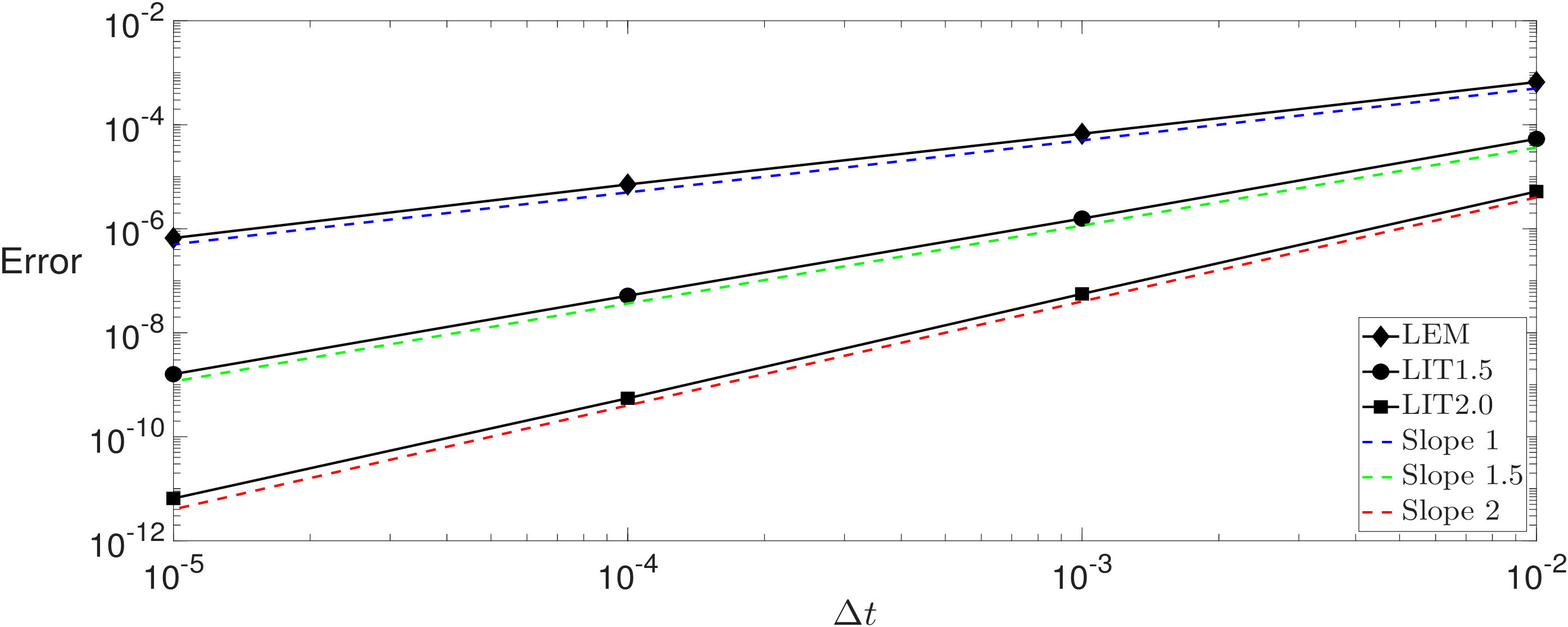}
  \caption{$L^{2}(\Omega)$-errors on the interval $[0,1]$ of the Lamperti--Euler--Maruyama ($\LEM$) scheme, the Lamperti--Itô--Taylor-$1.5$ ($\LIT 1.5$) scheme, and the Lamperti--Itô--Taylor-$2.0$ ($\LIT 2.0$) scheme for the Nagumo SDE in~\eqref{eq:Nagumo_SDE} for $\lambda=1$ and reference lines with slopes $1$, $1.5$, and $2$, respectively. Averaged over $3100$ samples and $x_{0} = 1/2$.}\label{num:Nagumo_conv_plot}
  \end{center}
\end{figure}

\subsection{SIS SDE}
Here we consider the SIS SDE given by
\begin{equation}\label{eq:SIS_SDE}
\left\lbrace
\begin{aligned}
& \diff X(t) = X(t) \left( 1 - X(t) \right) \diff t + \lambda X(t) \left( 1 - X(t) \right) \diff B(t),\ t \in (0,T], \\ 
& X(0) = x_{0} \in \D = (0,1).
\end{aligned}
\right.
\end{equation}
The coefficient functions $f(r) = r(1-r)$ and $g(r) = r(1-r)$ satisfy Assumptions~\ref{ass:f},~\ref{ass:g}, and~\ref{ass:fg} for any $k \in \mathbb{N}$. Thus, by Corollary~\ref{cor:LIT_conv}, Lamperti--Itô--Taylor-$\gamma$ schemes of arbitrarily high order are applicable.

The transformed SDE is in this case given by
\begin{equation*}
\left\lbrace
\begin{aligned}
& \diff Y(t) = \left(1 - \frac{\lambda^{2}}{2} + \lambda^{2} \Phi^{-1}(Y(t)) \right) \diff t + \lambda \diff B(t),\ t \in (0,T], \\ 
& Y(0) = \Phi(x_{0}) \in \mathbb{R},
\end{aligned}
\right.
\end{equation*}
where
\begin{equation*}
\Phi(r) = \log \left( \frac{r}{1-r} \right),\ r \in \D,
\end{equation*}
and
\begin{equation*}
\Phi^{-1}(r) = \frac{e^{x}}{e^{r} + 1},\ r \in \mathbb{R},
\end{equation*}
where we for simplicity choose $w_{0}=1/2$; that is,
\begin{equation*}
H(r) = 1 - \frac{\lambda^{2}}{2} + \lambda^{2} \Phi^{-1}(r),\ r \in \mathbb{R}.
\end{equation*}
As in the two previous numerical experiments, we need to evaluate $H,H'$ and $H''$ to implement $\LEM, \LIT 1.5$ and $\LIT 2.0$.

Direct differentiation gives us
\begin{equation*}
H'(r) = \lambda^{2} g(\Phi^{-1}(r)),\ r \in \mathbb{R},
\end{equation*}
and
\begin{equation*}
H''(r) = \lambda^{2} g'(\Phi^{-1}(r)) g(\Phi^{-1}(r)),\ r \in \mathbb{R}.
\end{equation*}

We first showcase in Figure~\ref{num:SIS_path_comp} that the comparison schemes $\EM$, $\SEM$, and $\TE$ leave the invariant domain $\D = (0,1)$ of the SIS SDE~\eqref{eq:SIS_SDE}, and are hence not boundary-preserving, while the $\LIT \gamma$ schemes, for $\gamma \in \{0.5,1.5,2 \}$, are confined to the invariant domain $\D = (0,1)$. We illustrate this more systematically in Table~\ref{tb:SIS}, where we display the proportion of samples out of $100$ that only contained values in the invariant domain $\D = (0,1)$ for the comparison schemes $\EM$, $\SEM$, and $\TE$ and for the $\LIT \gamma$ schemes. Table~\ref{tb:SIS} further illustrates that the comparison schemes are not boundary-preserving and strengthens the hypothesis that the $\LIT \gamma$ schemes, for $\gamma \in \{0.5,1.5,2 \}$, are boundary-preserving.

\begin{figure}[htp]
\begin{center}
  \includegraphics[width=0.8\textwidth]{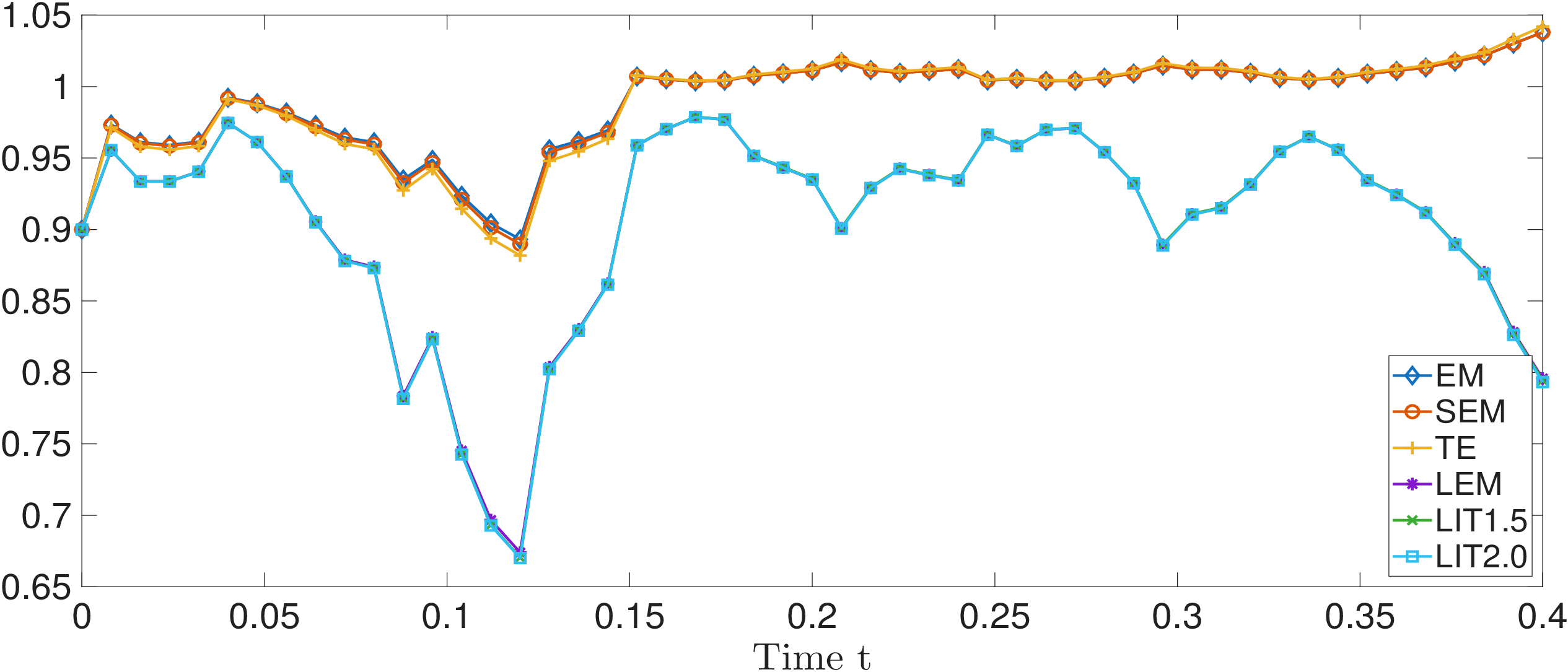}
  \caption{Path comparison of the Lamperti--Euler--Maruyama ($\LEM$) scheme, Lamperti--Itô--Taylor-1.5 ($\LIT 1.5$) scheme, Lamperti--Itô--Taylor-2.0 ($\LIT 2.0$) scheme, Euler--Maruyama ($\EM$) scheme, the semi-implicit Euler--Maruyama ($\SEM$) scheme, and the tamed Euler ($\TE$) scheme applied to the SIS SDE in~\eqref{eq:SIS_SDE} with parameters $\lambda = 4$, $x_{0}=0.9$, $T = 0.4$ and $M=50$.}\label{num:SIS_path_comp}
  \end{center}
\end{figure}

\begin{table}[!htbp]
\begin{center}
\begin{tabular}{||c c c c c c c||}
 \hline
 $\lambda$ & $\LEM$ & $\LIT 1.5$ & $\LIT 2.0$ & $\LEM$ & $\SEM$ & $\TE$ \\ [0.5ex]
 \hline\hline
 $3$ & $100/100$ & $100/100$ & $100/100$ & $100/100$ & $100/100$ & $100/100$\\
 \hline
 $4$ & $100/100$ & $100/100$ & $100/100$ & $25/100$ & $23/100$ & $29/100$ \\
 \hline
 $5$ & $100/100$ & $100/100$ & $100/100$ & $5/100$  & $4/100$& $5/100$ \\ [1ex]
 \hline
\end{tabular}
\caption{Proportion of samples containing only values in $\D = (0,1)$ out of $100$ simulated sample paths for the Lamperti--Euler--Maruyama ($\LEM$) scheme, Lamperti--Itô--Taylor-1.5 ($\LIT 1.5$) scheme, Lamperti--Itô--Taylor-2.0 ($\LIT 2.0$) scheme, Euler--Maruyama ($\EM$) scheme, the semi-implicit Euler--Maruyama ($\SEM$) scheme, and the tamed Euler ($\TE$) scheme for the SIS SDE in~\eqref{eq:SIS_SDE} for three choices of $\lambda>0$. The parameters used are: $T=1$, $\Delta t = 1/50$ and with $x_{0}$ uniformly distributed on $\D = (0,1)$ for each sample. \label{tb:SIS}}
\end{center}
\end{table}

Lastly, we display the $L^{2}(\Omega)$-errors of the $\LIT \gamma$ schemes, for $\gamma \in \{0.5,1.5,2 \}$, applied to the SIS SDE~\eqref{eq:SIS_SDE} in Figure~\ref{num:SIS_conv_plot}. The decay of the $L^{2}(\Omega)$-errors in Figure~\ref{num:SIS_conv_plot} align well with the reference lines with slopes $1,1.5$, and $2$, respectively, and numerically confirm the convergence result in Corollary~\ref{cor:LIT_conv}.

\begin{figure}[htp]
\begin{center}
  \includegraphics[width=0.8\textwidth]{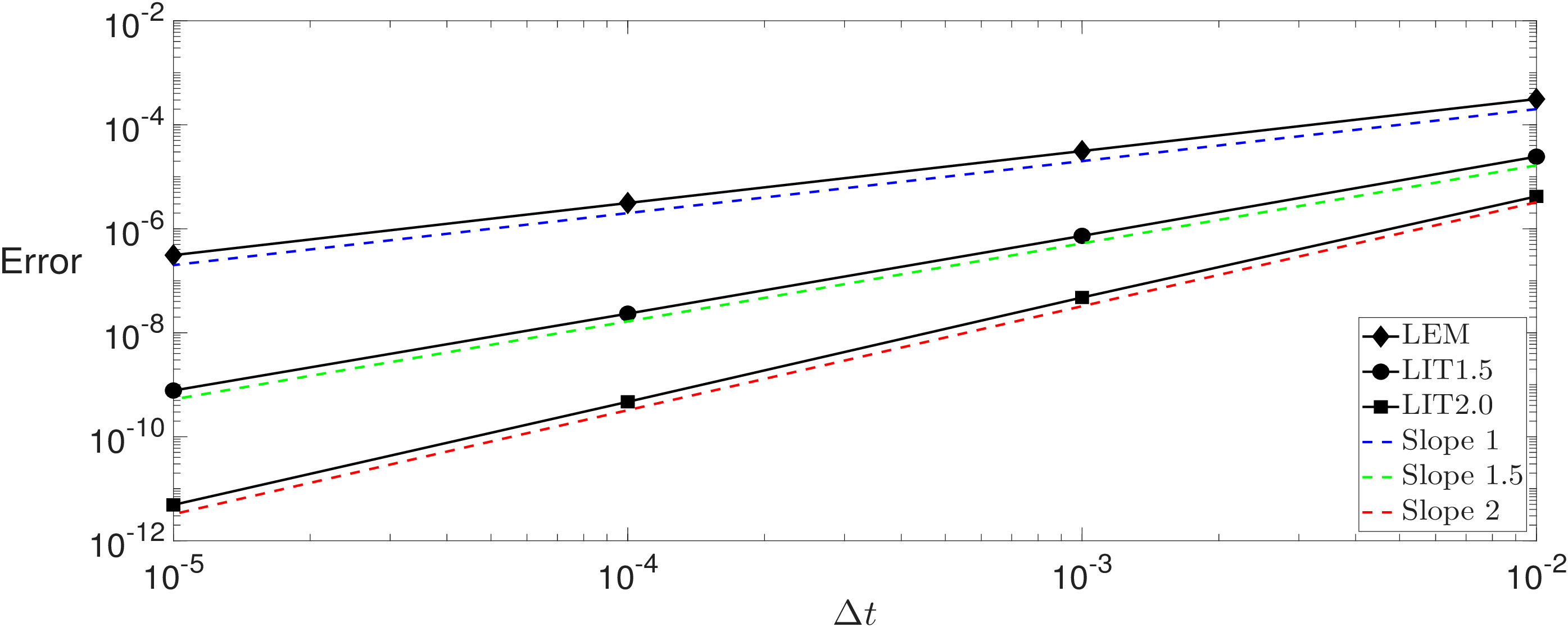}
  \caption{$L^{2}(\Omega)$-errors on the interval $[0,1]$ of the Lamperti--Euler--Maruyama ($\LEM$) scheme, the Lamperti--Itô--Taylor-$1.5$ ($\LIT 1.5$) scheme, and the Lamperti--Itô--Taylor-$2.0$ ($\LIT 2.0$) scheme for the SIS SDE in~\eqref{eq:SIS_SDE} for $\lambda=1$ and reference lines with slopes $1$, $1.5$, and $2$. Averaged over $300$ samples and $x_{0} = 1/2$.}\label{num:SIS_conv_plot}
  \end{center}
\end{figure}

\section{Discussion}
The main contribution of this work is to combine the Lamperti transform with suitable regularity assumptions on the coefficient functions of the original SDE that ensure high regularity of the transformed equation. This allows high-order numerical methods to be applied to the transformed SDE, after which the inverse Lamperti transform yields a boundary-preserving high-order scheme for the original equation. Although the present work focuses on strong Itô--Taylor schemes, the same strategy is not limited to this class of methods. In principle, it can also be combined with other high-order strong schemes, such as high-order time-splitting methods and stochastic Runge--Kutta schemes, as well as with high-order weak schemes, including weak Itô--Taylor methods.

\section*{Acknowledgements}
This work was made possible by the Swedish Defence Research Agency (FOI).

\bibliographystyle{abbrv}
\bibliography{Boundary-preserving_Lamperti-Ito-Taylor_approximations_for_some_stochastic_differential_equations}

\end{document}